\pgfplotsset{compat=newest}
\definecolor{k4}{rgb}{0.8,0.8,0.8}
\definecolor{k3}{rgb}{0.6,0.6,0.6}
\definecolor{k2}{rgb}{0.4,0.4,0.4}
\definecolor{k1}{rgb}{0.2,0.2,0.2}
\numberwithin{equation}{section}
\newtheorem{theorem}{Theorem}[section]
\newtheorem*{thm*}{Theorem}
\newtheorem*{prop*}{Proposition}
\newtheorem{prop}{Proposition}[section]
\newtheorem{lemma}{Lemma}[section]
\newtheorem{defi}{Definition}[section]
\newtheorem{rem}[defi]{Remark}
\newcommand{\weakStar}{\overset{*}{\rightharpoonup}}
\newcommand{\R}{\ensuremath{\mathbb{R}}}
\newcommand{\eps}{\varepsilon}
\newcommand{\norm}[1]{\left\Vert#1\right\Vert}
\newcommand{\be}{\begin{equation}}
\newcommand{\ee}{\end{equation}}
\newcommand{\ba}{\begin{eqnarray}}
\newcommand{\ea}{\end{eqnarray}}
\newcommand{\beq}{\begin{equation}}
\newcommand{\eeq}{\end{equation}}
\renewcommand{\leq}{\leqslant}
\renewcommand{\geq}{\geqslant}
\def \Om {\Omega}
\def \om {\omega}
\def \dis {\displaystyle}
\def\into{\int_\Omega}
\def\Chi{\mathcal{X}}
\def\ep{\varepsilon}
\def\beq{\begin{equation}}
\def\eeq{\end{equation}}
\def\ecart{\noalign{\medskip}}
\def\ba{\begin{array}}
\def\ea{\end{array}}
\author{Juan Casado-Diaz$^{1}$}
\author{Carlos Conca$^{2}$}
\author{Donato Vasquez-Varas$^{3}$}
\address{$^1$\noindent Juan Casado-Díaz
\smallskip
\newline Dpto. de Ecuaciones Diferenciales y Análisis Numérico, Universidad de Sevilla, Sevilla 41012,
Spain (jcasadod@us.es).
\smallskip
\noindent E-mail: \texttt{jcasadod@us.es} %, \emph{corresponding author}
}
\address{$^2$\noindent Carlos Conca
\smallskip
\newline Department of Engineering Mathematics, Center for Mathematical Modelling (CMM), UMI 2807 CNRS-Chile
\& Center for Biotechnology and Bioengineering (CeBiB), University of Chile.
\smallskip
\noindent E-mail: \texttt{cconca@dim.uchile.cl}
}
\address{$^3$\noindent Donato Vásquez-Varas
\smallskip
\newline Department of Engineering Mathematics, University of Chile.
\newline
\smallskip 
\noindent E-mail: \texttt{dvasquez@dim.uchile.cl}
}
\date{\today}
\title[The maximization of the p-Laplacian energy for a two-phase material]{The maximization of the p-Laplacian energy\\ for a two-phase material} 
\keywords{two-phase material, p-Laplacian operator, relaxation, smoothness, non-existence}
\subjclass[2000]{49J20}
\thanks{Juan Casado-Díaz has been partially supported by the Project MTM2017-83583 of the Ministerio de Ciencia, Innovación y Universidades of Spain. Carlos Conca is partially supported by PFBasal-001 and AFBasal170001 projects,
and from the Regional Program STIC-AmSud Project NEMBICA-20-STIC-05.
Donato Vásquez-Varas has been partially supported by the CONICYT PFCHA/DOCTORADO BECAS CHILE/2018 - 21182101}
\begin{document}
\maketitle

%\vspace*{\fill}

\begin{abstract}
{\color{blue} We consider the optimal arrangement of two diffusion materials in a bounded open set $\Om\subset \R^N$ in order to maximize the energy. The diffusion problem is modeled by the $p$-Laplacian operator. It is well known that this type of problems has no solution in general and then that it is necessary to work with a relaxed formulation. In the present paper we obtain such relaxed formulation using the homogenization theory, i.e. we replace both materials by microscopic mixtures of them. Then we get some uniqueness results and a system of optimality conditions. As a consequence we prove some regularity properties for the optimal solutions of the relaxed problem. Namely, we show that the flux is in the Sobolev space $H^1(\Om)^N$ and that the optimal proportion of the materials is derivable in the orthogonal direction to the flux. This will imply that the unrelaxed problem has no solution in general. Our results extend those obtained by the first author for the Laplace operator.}
\end{abstract}

%\tableofcontents

\section{Introduction} 
The present paper is devoted to study  an optimal design problem for a diffusion process in a two-phase material modeled by the $p$-Laplacian operator. Namely, we are interested in the control problem
\begin{equation}
\left\{ \begin{array}{c}
\displaystyle\max_{\om} \int_{\Omega}\big(\alpha\mathcal{X}_{\omega}+\beta\left(1-\mathcal{X}_{\omega}\right)\big)|\nabla u|^{p} dx \\ \ecart\dis -{\rm div}\big((\alpha\mathcal{X}_{\omega}+\beta\left(1-\mathcal{X}_{\omega})\big)|\nabla u|^{p-2}\nabla u\right)=f \ \hbox{ in }\Om\\ \ecart\dis u\in W^{1,p}_{0}(\Omega),\quad
 \omega \subset \Omega \mbox{ mesurable },\quad |\om|\leq \kappa, \end{array}\right.
\label{Problema:Original}
\end{equation}
with $\Omega$ a bounded open set in $\R^N$, $N\geq 2$, $p\in (1,\infty)$,  $\alpha,\beta,\kappa>0$, $\alpha<\beta$, $\mathcal{X}_{\omega}$ the characteristic function of the set $\omega$, and $f\in W^{-1,p'}(\Om)$, with $p'$ is the Holder conjugate of $p$ $\left(p'=\frac{p}{p-1}\right)$.\par

In \eqref{Problema:Original} the equation is understood to hold in the sense of distributions, combined with $u\in W^{1,p}_0(\Om)$, denoting
by $u^\alpha$ and $u^{\beta}$ the values of $u$ in $\om$ and $\Om\setminus \om$ respectively and assuming $\om$ smooth enough, this means that  the interphase conditions on $\partial\om$ are given by
$$u^\alpha=u^\beta,\ \alpha|\nabla u^\alpha|^{p-2}\nabla u^\alpha\cdot\nu=\beta|\nabla u^\beta|^{p-2}\nabla u^\beta\cdot\nu\ \mbox{ on }\partial\om\cap \Om$$
in the sense of the traces in $W^{1/p',p}(\partial\om)$ and $W^{-1/p',p'}(\partial\om)$ respectively. Here $\nu$ denotes a unitary normal vector on $\partial\om$.\par

 Physically the constants $\alpha$ and $\beta$ represent two diffusion materials that we are mixing in order to maximize the corresponding functional, which in \eqref{Problema:Original} represent the potential energy. 
The control variable is the set $\om$ where we place the material $\alpha$. If we do not impose any restriction on the amount of this material, it is simple to check that the solution of \eqref{Problema:Original} is the trivial one given by $\om=\Om$. Thus, the interesting problem corresponds to $\kappa<|\Om|,$ i.e. the material $\alpha$ is better than $\beta$ but it is also more expensive and therefore, we do not want to use a large amount of it in the mixture. The case corresponding to $p=2$  has been studied in several papers (see e.g. \cite{Cas2}, \cite{GoKoRe}, \cite{MuTa}) where some classical applications are the optimal mixture of two materials in the cross-section of a beam in order to minimize the torsion, and 
the optimal arrangement of two viscous fluids in a pipe. For $p\in (1,2)\cup(2,\infty)$ the p-Laplacian operator models the torsional creep in the cross-section of a beam \cite{Kaw} and therefore problem \eqref{Problema:Original} corresponds to find the material which minimizes the torsion for the mixture of  two homogeneous materials in non-linear elasticity.\par
It is well known that a control problem in the coefficients like \eqref{Problema:Original} has no solution in general (\cite{Mur0}, \cite{Mur0b}). In fact, some counterexamples  to the existence of solution for \eqref{Problema:Original} with $p=2$ can be found in \cite{Cas2} and \cite{MuTa}. Thus, it is necessary to work with a relaxed formulation. One way to obtain this  formulation is to use the homogenization theory (\cite{All}, \cite{MuTa}, \cite{Tar}). The idea is to replace the material $\alpha\mathcal{X}_{\omega}+\beta(1-\mathcal{X}_{\omega})$ in \eqref{Problema:Original} by microscopic mixtures of $\alpha,\beta$ with a certain proportion $\theta=\theta(x)\in [0,1]$, $x\in \Omega$. The new materials do not only depend on the proportion of each original material but also on their microscopical distribution. In the case $p=2$, this relaxed formulation has been obtained in \cite{MuTa}. Here we show that a relaxed formulation for \eqref{Problema:Original} is given by
\begin{equation}
\left\{ \begin{array}{c}
 \displaystyle\max_{\theta} \left\{{1\over p}\into \Big(\theta\alpha^{1\over 1-p}+(1-\theta) \beta^{1\over 1-p}\Big)^{1-p}|\nabla u|^pdx\right\} \\ \ecart\dis -{\rm div}\Big(\big(\theta\alpha^{1\over 1-p}+(1-\theta) \beta^{1\over 1-p}\big)^{1-p}|\nabla u|^{p-2}\nabla u\Big)=f \ \hbox{ in }\Om\\ \ecart\dis u\in W^{1,p}_{0}(\Omega),\quad  \theta\in L^\infty(\Om;[0,1]),\quad \into\theta(x)\,dx\leq \kappa, \end{array}\right.
\label{ProRelIn}
\end{equation}
which is equivalent to the Calculus of Variations problem
\begin{equation}
\left\{ \begin{array}{c}
 \displaystyle\min_{\theta} \left\{{1\over p}\into \Big(\theta\alpha^{1\over 1-p}+(1-\theta) \beta^{1\over 1-p}\Big)^{1-p}|\nabla u|^pdx-\langle f,u\rangle\right\} \\ \ecart\dis  u\in W^{1,p}_{0}(\Omega),\quad  \theta\in L^\infty(\Om;[0,1]),\quad \into\theta(x)\,dx\leq \kappa, \end{array}\right.
\label{ProRelIn2}
\end{equation}
\textcolor{red}{where here and in what follows, $\langle f,u\rangle$ denotes the duality product of $f$ and $u$ as elements of $W^{-1,p'}(\Om)$ and $W^{1,p}_0(\Om)$ respectively.}\par
Our main results extend those obtained in \cite{Cas2} (see also \cite{MuTa}) for $p=2$ relative to the uniqueness and regularity of a solution for (\ref{ProRelIn}). Namely, we prove that although it is not clear that (\ref{ProRelIn2}) has a unique solution $(u,\theta)$, the flux
$$\sigma=\Big({\theta\over\alpha^{1\over p-1}}+{1-\theta\over \beta^{1\over p-1}}\Big)^{1-p}|\nabla u|^{p-2}\nabla u$$
is unique. Moreover, assuming $\Om\in C^{1,1}$ and $f\in L^q(\Om)\cap W^{1,1}(\Om)$, with $q>N$, we have that $\sigma$ belongs to $H^1(\Om)^N\cap L^\infty(\Om)$. This is related to some regularity results for the $p$-{L}aplacian operator obtained in \cite{lou2008singular}. We also prove that every solution $(u,\theta)$ of (\ref{ProRelIn2}) satisfies
\beq\label{regusi} u\in W^{1,\infty}(\Om), \quad \partial_i\theta\,\sigma_j-\partial_j\theta\,\sigma_i\in L^2(\Om),\ 1\leq i,j\leq N,\eeq
where $\sigma_i$ denotes the $i$-th component of the vector function $\sigma$, i.e. $\theta$ is derivable in the orthogonal subspace to $\sigma$. The existence of first derivatives for $\sigma$ and $\theta$ will imply that we cannot hope in general an existence result for the unrelaxed problem (\ref{Problema:Original}). Namely, the existence of a solution for (\ref{Problema:Original}) is equivalent to the existence of a solution for (\ref{ProRelIn2}) where $\theta$ only takes the values zero and one, but then the derivatives of $\theta$ in (\ref{regusi}) vanish. Assuming $\Om$ simply connected with connected boundary, we show that  this implies $\sigma=|\nabla w|^{p-2}\nabla w$, with $w$ the unique solution of
$$\left\{\ba{l}\dis -{\rm div}\, (|\nabla w|^{p-2}\nabla w)=f\ \hbox{ in }\Om\\ \ecart\dis w\in W^{1,p}_0(\Om).\ea\right.$$
Similarly to the result obtained in (\cite{Cas2}, \cite{MuTa}), we prove that this is only possible if $\Om$ is a ball.\par
We finish this introduction remembering that the results obtained in the present paper are also related to those given in \cite{Cas} where, for $p=2$, it is considered the minimization in \eqref{Problema:Original} instead of the maximization.  Problem \eqref{Problema:Original} is also related to the minimization of the first eigenvalue for the $p$-Laplacian operator (see \cite{Cas2}, \cite{Cas3}, \cite{CoLaMa}, \cite{CoMaSa}, \cite{MoYou}  for $p=2$), problem which we hope to study in a later work.
\section{Position of the problem. Relaxation and equivalent formulations}
For a bounded open set $\Omega\subset \R^N$, three positive constants $\alpha,\beta,\kappa$ with $0<\alpha<\beta$, $\kappa< |\Om|$, and a distribution  $f\in W^{-1,p'}(\Omega)$, $p>1$, we are interested in the control problem
\begin{equation} 
\left\{\begin{array}{c}
\displaystyle\max_{\om}  \int_{\Omega}\left(\alpha\mathcal{X}_{\omega}+\beta\mathcal{X}_{\Om\setminus\om}\right)|\nabla u_{\omega}|^p dx \\ \noalign{\medskip}
\omega\subset\Omega\mbox{ measurable}, \ |\omega|\leq \kappa \\ \noalign{\medskip}
\displaystyle-{\rm div}\left((\alpha\mathcal{X}_{\omega}+\beta\mathcal{X}_{\Om\setminus\om})|\nabla u_{\omega}|^{p-2}\nabla u_\om\right)=f \mbox{ in }\Omega, \ u_{\omega}\in W^{1,p}_0(\Omega).
\end{array}\right.
\label{Problem:Compliance}
\end{equation}
Here $\alpha$ and $\beta$ represent the  diffusion coefficients of two materials, where the diffusion process is modeled by the $p$-Laplacian operator. The problem consists in maximizing the potential energy. 
\par
 Using $u_\om$ as test function in the state equation we have
$$\displaystyle\int_{\Omega}\left(\alpha\mathcal{X}_{\omega}+\beta\mathcal{X}_{\Om\setminus\om}\right)|\nabla u_{\omega}|^p dx=\langle f,u_{\omega}\rangle,$$
 By the above equality and since $p'=\frac{p}{p-1}$ we have 
$$\begin{array}{l}
\displaystyle\int_{\Omega}\left(\alpha\mathcal{X}_{\omega}+\beta\mathcal{X}_{\Om\setminus\om}\right)|\nabla u_{\omega}|^p dx\\=\displaystyle -p'\left(\frac{1}{p}\int_{\Omega}\left(\alpha\mathcal{X}_{\omega}+\beta\mathcal{X}_{\Om\setminus\om}\right)|\nabla u_{\omega}|^p dx-\int_{\Omega}\left(\alpha\mathcal{X}_{\omega}+\beta\mathcal{X}_{\Om\setminus\om}\right)|\nabla u_{\omega}|^p dx\right)\\ 
\displaystyle  =-p'\left(\frac{1}{p}\int_{\Omega}\left(\alpha\mathcal{X}_{\omega}+\beta\mathcal{X}_{\Om\setminus\om}\right)|\nabla u_{\omega}|^p dx-\left<f,u_{\omega}\right>\right
)\end{array}$$ 
which combined with $u_\om$, unique solution of the minimization problem
$$\min_{u\in W^{1,p}_0(\Om)}\left\{{1\over p}\into \big(\alpha\mathcal{X}_\om+\beta\Chi_{\Om\setminus\om}\big)|\nabla u|^pdx-\langle f,u\rangle\right\},$$
gives the equivalent formulation for problem \eqref{Problem:Compliance}:
\begin{equation} 
\left\{\begin{array}{c}
\displaystyle\min_{\om,u}  \left\{{1\over p}\into \big(\alpha\Chi_\om+\beta\Chi_{\Om\setminus\om}\big)|\nabla u|^pdx-\langle f,u\rangle\right\} \\ \noalign{\medskip}
u\in W^{1,p}_0(\Om),\quad \om\subset\Omega\mbox{ measurable}, \quad  |\omega|\leq \kappa.
\end{array}\right.
\label{CompMin}
\end{equation}
\par
It is known that the maximum in
 \eqref{Problem:Compliance} or the minimun in \eqref{CompMin} are not achieved, i.e., that \eqref{Problem:Compliance} (or \eqref{CompMin})  has  no solution in general. Namely, for $p=2$ and $f=1$, it has been proved in \cite{Cas2} and \cite{MuTa}  that if $\Om$ is smooth, with  connected smooth boundary, and  \eqref{Problem:Compliance} has a solution, then $\Om$ is a ball. Some other classical counterexamples to the existence of solution for problems related to \eqref{Problem:Compliance} can be found in \cite{Mur0} and \cite{Mur0b}. Due to this difficulty it is then necessary to find a relaxed formulation for \eqref{Problem:Compliance}. This is done by the following theorem
\begin{theorem} \label{ThRelax} A relaxed formulation of problem \eqref{CompMin} is given by
\begin{equation} 
\left\{\begin{array}{c}\dis
\min_{\theta,u} \left\{ {1\over p}\into \Big(\theta\alpha^{1\over 1-p}+(1-\theta)\beta^{1\over 1-p}\Big)^{1-p} |\nabla u|^pdx-\langle f,u\rangle\right\} \\ \noalign{\medskip}
\displaystyle u\in W^{1,p}_0(\Om),\quad \theta\in L^{\infty}(\Omega;[0,1]), \quad \int_{\Omega}\theta dx\leq \kappa,\end{array}\right.
\label{Problem:ComplianceRelaxtion}
\end{equation}
in the following sense:
\begin{enumerate}
\item Problem \eqref{Problem:ComplianceRelaxtion} has a solution.
\item The infimum for problem \eqref{CompMin} agrees with the minimum for \eqref{Problem:ComplianceRelaxtion}.
\item Every minimizing sequence $(u_n,\om_n)$ for \eqref{CompMin} has a subsequence still denoted by $(u_n,\om_n)$ such that
\beq\label{convsumi} u_n\rightharpoonup u\ \hbox{ in }W^{1,p}_0(\Om),\quad\Chi_{\om_n}\stackrel{\ast}\rightharpoonup \theta\ \hbox{ in }L^\infty(\Om),\eeq
with $(u,\theta)$ solution of \eqref{Problem:ComplianceRelaxtion}.
\item For every pair $(u,\theta)\in W^{1,p}_0(\Om)\times L^\infty(\Om;[0,1])$  there exist $u_n\in W^{1,p}_0(\Om)$, $\om_n\subset\Om$ measurable, with $|\om_n|\leq\kappa$ such that \eqref{convsumi} holds and such that
\beq\label{alcmin}\lim_{n\to\infty}\into \big(\alpha\Chi_{\om_n}+\beta\Chi_{\Om\setminus\om_n}\big)|\nabla u_n|^pdx=\into \Big(\theta \alpha^{1\over 1-p}+(1-\theta)\beta^{1\over 1-p}\Big)^{1-p}|\nabla u|^pdx.\eeq
\end{enumerate}
\end{theorem}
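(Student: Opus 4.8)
The plan is to prove the four assertions by combining a soft lower-semicontinuity argument (which yields the existence in (1), the semicontinuity half of (2)--(3), and the compactness of minimizing sequences) with an explicit oscillating construction of laminates (which yields (4) and the reverse inequality in (2)). Throughout I write $g(\theta)=\theta\alpha^{1\over 1-p}+(1-\theta)\beta^{1\over 1-p}$ and $a(\theta)=g(\theta)^{1-p}$, denote by $J(\om,u)$ the functional of \eqref{CompMin} and by $\bar J(\theta,u)$ that of \eqref{Problem:ComplianceRelaxtion}, so that the relaxed integrand is $a(\theta)|\nabla u|^p$ and, crucially, $a(\Chi_\om)=\alpha\Chi_\om+\beta\Chi_{\Om\setminus\om}$ whenever $\theta=\Chi_\om$ takes only the values $0,1$. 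Thus $J(\om,u)=\bar J(\Chi_\om,u)$, and every admissible $(\om,u)$ for \eqref{CompMin} is an admissible $(\Chi_\om,u)$ for \eqref{Problem:ComplianceRelaxtion} with the same value of the functional; this already gives the easy inequality that the minimum in \eqref{Problem:ComplianceRelaxtion} is not larger than the infimum in \eqref{CompMin}.

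The lower semicontinuity rests on the joint convexity of $(\theta,\xi)\mapsto a(\theta)|\xi|^p$. Since $g$ is affine and positive on $[0,1]$ and $x\mapsto x^{1-p}$ is convex on $(0,\infty)$, a Young-type computation produces the representation $a(\theta)|\xi|^p=\sup_{\eta\in\R^N}\big(\eta\cdot\xi-C|\eta|^{p'}g(\theta)\big)$ for an explicit constant $C=C(p)>0$, each function under the supremum being affine in $(\theta,\xi)$. Integrating and letting $\eta$ range over $L^{p'}(\Om)^N$, I get $\into a(\theta)|\nabla u|^p\,dx=\sup_{\eta}\into\big(\eta\cdot\nabla u-C|\eta|^{p'}g(\theta)\big)\,dx$ as a supremum of functionals that are \emph{continuous} for the (weak $W^{1,p}_0$)$\times$(weak-$*$ $L^\infty$) convergence; hence the relaxed functional is sequentially lower semicontinuous for that convergence. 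Coercivity being clear from $a(\theta)\ge\alpha>0$ and Poincaré, the direct method now yields assertion (1). Applied to a minimizing sequence $(u_n,\om_n)$ of \eqref{CompMin}, for which $\theta_n=\Chi_{\om_n}\weakStar\theta$ with $\into\theta\le\kappa$ and $u_n\weak u$, it gives $\bar J(\theta,u)\le\liminf \bar J(\Chi_{\om_n},u_n)$, which equals the infimum in \eqref{CompMin}; this proves assertion (3) and the semicontinuity half of (2).

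The substance is assertion (4), the construction of recovery configurations, which also supplies the missing inequality that the infimum in \eqref{CompMin} is not larger than the minimum in \eqref{Problem:ComplianceRelaxtion}. Fixing $(u,\theta)$ with $\into\theta\le\kappa$ (the only case compatible with $|\om_n|\le\kappa$), I would first reduce, by density in $W^{1,p}_0(\Om)\times L^\infty(\Om;[0,1])$, to the case where $\nabla u$ and $\theta$ are constant, equal to $\xi$ and $\vartheta$, on each cube of a fine grid. On such a cube I insert a periodic laminate whose interfaces are the hyperplanes orthogonal to $\xi$, the phase $\alpha$ occupying a layer of relative width $\vartheta$, and I replace $u$ by $u_n(x)=u(x)+{1\over n}\phi_n(n\,e\cdot x)$ with $e=\xi/|\xi|$ and $\phi_n$ periodic, piecewise affine, chosen so that the normal flux is constant across the layers. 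A direct computation then shows that $|\nabla u_n|$ equals $(c/\alpha)^{1\over p-1}$, resp. $(c/\beta)^{1\over p-1}$, in the two phases, that the mean gradient is $\xi$, and that the mean energy density is exactly $a(\vartheta)|\xi|^p$; summing over the cubes and passing to the limit gives \eqref{alcmin} together with $\Chi_{\om_n}\weakStar\theta$ and $u_n\weak u$. Standard cut-off corrections near the grid faces and near $\partial\Om$ restore $u_n\in W^{1,p}_0(\Om)$ at an energy cost that is $o(1)$, and since $|\om_n|\to\into\theta\le\kappa$ one enforces $|\om_n|\le\kappa$ by removing from $\om_n$ a vanishing set.

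I expect the recovery construction of the previous paragraph to be the main obstacle. For $p=2$ the corrector is explicit and linear, but here the flux law $\sigma=a|\nabla u|^{p-2}\nabla u$ is nonlinear, so one must verify carefully that aligning $\nabla u$ orthogonally to the layers makes the laminate attain the harmonic mean $a(\vartheta)$ rather than a larger effective coefficient, and the gluing of the local correctors across cube faces and the boundary layer near $\partial\Om$ must be controlled in the $W^{1,p}$ norm; the density reduction to piecewise-constant data also has to be carried out so that both the energy and the volume constraint pass to the limit. The lower bound, by contrast, is soft once the convex-duality representation of $a(\theta)|\xi|^p$ is in hand.
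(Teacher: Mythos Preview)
Your proposal is correct and follows essentially the same strategy as the paper: joint convexity/lower semicontinuity for assertions (1)--(3), and explicit rank-one laminates with interfaces orthogonal to $\nabla u$ for the recovery sequence (4), glued over a piecewise-constant approximation of $(\nabla u,\theta)$ and concluded by a density/diagonal argument. The only cosmetic difference is that you deduce lower semicontinuity from the Fenchel dual representation $a(\theta)|\xi|^p=\sup_{\eta}\big(p\,\eta\cdot\xi-(p-1)\,g(\theta)|\eta|^{p'}\big)$, whereas the paper invokes directly the convexity of $(\xi,t)\mapsto |\xi|^p/t^{p-1}$ on $\R^N\times(0,\infty)$; both arguments yield the same conclusion.
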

\begin{rem} Such as we will see in the proof of Theorem \ref{ThRelax}, the relaxed materials  in \eqref{Problem:ComplianceRelaxtion} are obtained as a simple lamination in a parallel direction to $\nabla u$. In this context, a laminated material corresponds to a particular distribution of two materials, which depends exclusively on one direction, say $\xi\in\R^{N}$, which is represented by a function $\varphi\in L^{\infty} (\Omega; [0, 1])$ with a generic form as follows:
\[
\varphi (x) = g (\xi \cdot x) \quad   \forall x\in \Om, 
\]
where $g$ is a real-valued function. (see sections 2.3.5 and 2.2.1 in \cite{All} for more details on laminated materials). 
\end{rem}
\begin{proof} [Proof of Theorem \ref{ThRelax}.] Using that the function $J:\R^N\times (0,\infty)\to \R$ defined by
\beq\label{convfun}J(\xi,t)={|\xi|^p\over t^{p-1}},\quad\forall\,(\xi,t)\in\R^N\times (0,\infty),\eeq
is convex, and the sequential compactness of the bounded sets in $W^{1,p}_0(\Om)\times L^\infty(\Om)$ with respect to the weak-$\ast$ topology, it is immediate to show that \eqref{Problem:ComplianceRelaxtion} has at least a solution and that every minimizing sequence $(u_n,\theta_n)$ for \eqref{Problem:ComplianceRelaxtion} has a subsequence which converges in $W^{1,p}_0(\Om)\times L^\infty(\Om)$ weak-$\ast$ to a minimum.\par
Since problem \eqref{CompMin} consists in minimizing the same functional than the one in \eqref{Problem:ComplianceRelaxtion}, but on the smaller set
$$\Big\{(u,\Chi_\om)\in W^{1,p}_0(\Om)\times L^\infty(\Om;[0,1]):\ \om\subset\Om,\ \into\Chi_\om\,dx\leq\kappa\Big\},$$
it is clear that the infimum in \eqref{CompMin} is bigger or equal than the minimum in \eqref{Problem:ComplianceRelaxtion}. Thus, 
taking into account that the convergence of the minimizing sequences stated above will imply statement (3), we 
deduce that it is enough to prove statement (4) to complete the proof of Theorem \ref{ThRelax}. For this purpose, we introduce the functions (the index $\sharp$ means periodicity) $H\in L^\infty((0,1)\times \R)\cap C^0([0,1];L^1_\sharp(0,1))$, $G\in W^{1,\infty}((0,1)\times \R)\cap
C^0([0,1];W^{1,1}_\sharp (0,1))$,  by
\beq\label{DefHqr} H(q,r)=\sum_{k=-\infty}^\infty \Chi_{[k,k+q)}(r),\ \ G(q,r)=qr-\int_0^rH(q,s)\,ds,\quad\forall\, q,r\in [0,1]\times \R.
\eeq\par
Now, for a pair
$(u,\theta)\in C^1_c(\Om)\times C^0(\overline\Om)$ with
$$\into \theta\,dx< \kappa,$$
and $\delta>0$, we consider a family of cubes $Q_i$, $1\leq i\leq n_\delta$, of side $\delta$ such that
$$\overline\Om\subset \bigcup_{i=1}^{n_\delta}Q_i,\quad |Q_i\cap Q_j|=0,\ \hbox{ if }i\not =j,$$
and a partition of the unity in $\overline\Om$ by functions $\psi_i\in C^\infty_c(\R^N)$, with 
$${\rm sup}(\psi_i)\subset Q_i+B(0,\delta),\ \psi_{i}(x) \geq 0,\ 1\leq i\leq n_\delta \mbox{ and }  \sum_{i=1}^{n_{\delta}}\psi_{i}(x)=1,\ \forall x \in \Omega .$$ 
{Then, we take
$$q_i={1\over \delta^N}\int_{Q_i}\theta\,dx,\qquad \xi_i={1\over \delta^N}\int_{Q_i}\nabla u\,dx,\quad \zeta_i=\left\{\ba{ll}\xi_i & \hbox{ if }\xi_i\not=0\\ \ecart\dis
e & \hbox{ if }\xi_i=0,\ea\right.$$
with $e\in\R^N\setminus\{0\}$ fixed, and we introduce, for every $\ep>0$, the sets $\om_{\delta,\ep}\subset\Om$ and the functions $u_{\delta,\ep}\in W^{1,\infty}(\Om)$, with compact support by
$$\Chi_{\om_{\delta,\ep}}=\sum_{i=1}^{n_\delta} H\Big(q_i,{\zeta_i\cdot x\over \ep}\Big)\Chi_{Q_i},\quad u_{\delta,\ep}=u+\ep\sum_{i=1}^{n_\delta}\psi_i
{G\big(q_i,{\xi_i\cdot x\over \ep}\big)\big(\beta^{1\over 1-p}-\alpha^{1\over 1-p}\big)
\over \alpha^{1\over 1-p}q_i+\beta^{1\over 1-p}(1-q_i)}.$$}
{Using the result (see e.g. \cite{Allts})
\beq\label{thccp}\Phi\big(x,{x\cdot\xi\over \ep}\big)\stackrel{\ast}\rightharpoonup \int_0^1\Phi(x,s)\,ds\ \hbox{ in }L^\infty(\Om),
\eeq
for every $\Phi\in C^0(\overline\Om;L^1_\sharp(0,1))\cap L^\infty(\Om\times \R)$ and every $\xi\in\R^N\setminus\{0\}$,
we have that $\om_{\delta,\ep}$ satisfies
\beq\label{convoed}\Chi_{\om_{\delta,\ep}}\stackrel{\ast}\rightharpoonup \theta_\delta:=\sum_{i=1}^{n_\delta}q_i\Chi_{Q_i}\ \hbox{ in }L^\infty(\Om),\ \hbox{ when }\ep\to 0,\eeq
where thanks to $\theta$ uniformly  continuous, we also have
\beq\label{contd}\theta_\delta\to \theta\ \hbox{ in }L^\infty(\Omega;[0,1]),\ \hbox{ when }\delta\to 0.\eeq
In particular, since the integral of $\theta$ is strictly smaller than $\kappa$, we deduce that  for every $\delta>0$ small enough, there exists $\ep_\delta>0$ such that 
\beq\label{acotmeoed} |\om_{\delta,\ep}|<\kappa,\quad\forall\, 0<\ep<\ep_{\delta}.\eeq
Since $q(q-1)\leq G(q,r)\leq 0$, for every $q\in [0,1]$ and every $r\in \R$,  we also have
the existence of $C>0$ such that
\beq\label{convude}\|u_{\delta,\ep}-u\|_{C^0(\overline\Om)}\leq  C\ep,\qquad \forall\,\ep,\delta>0\eeq
and taking into account that $u$ has compact support and  that $G(q,0)=0$, we deduce that, for $\delta$ small enough,  $u_{\delta,\ep}$  has compact support and thus belongs to
$W^{1,p}_0(\Om)$. Moreover, thanks to (\ref{thccp}) (observe that there is not problem if $\xi_i=0$ because then $G(q_i,{\xi_i\cdot x\over\ep})=0$ for every $x\in\R^N$)
$$\ba{ll}\dis \nabla u_{\delta,\ep} &\dis =\nabla u+\sum_{i=1}^{n_\delta}
{\big(\beta^{1\over 1-p}-\alpha^{1\over 1-p}\big)
\over \alpha^{1\over 1-p}q_i+\beta^{1\over 1-p}(1-q_i)}\Big(\ep\nabla\psi_i G\big(q_i,{\xi_i\cdot x\over \ep}\big)+
\psi_i\big(q_i-H\big(q_i,{\xi_i\cdot x\over \ep}\big)\big)\xi_i\Big)\\ 
\ecart &\dis
\stackrel{\ast}\rightharpoonup \nabla u\ \hbox{ in }L^\infty(\Om)\ \hbox{ when }\ep\to 0,\quad \forall\,\delta>0.\ea$$
Therefore
\beq\label{conued} u_{\delta,\ep}\stackrel{\ast}\rightharpoonup u\ \hbox{  in }W^{1,\infty}(\Om)\cap W^{1,p}_0(\Om)\ \hbox{ when }\ep\to 0,\quad\forall\,\delta>0\ \hbox{ small engouh}.\eeq} \par
{On the other hand, using the above expression of $\nabla u_{\delta,\ep}$, and denoting $H_i(s)=H(q_i,s)$, we can use (\ref{thccp}) combined with $H(q,s)=1$ if $s\in (0,q)$, $H(q,s)=0$ if $s\in (q,1)$, and $\xi_i=0$ is $\zeta_i\not =\xi_i$ to deduce
$$\ba{l}\displaystyle \lim_{\ep\to 0}\into \big(\alpha\Chi_{\om_{\delta,\ep}}+\beta(1-\Chi_{\om_{\delta,\ep}})\big)|\nabla u_{\delta,\ep}|^pdx\\ 
\ecart
\displaystyle =\sum_{i=1}^{n_\delta}\int_{Q_i}\int_0^1\big(\alpha H_i(s)+\beta(1-H_i(s))\big)\Bigg|\nabla u+\sum_{i=1}^{n_\delta}\psi_i
{\big(q_i-H_i(s)\big)\big(\beta^{1\over 1-p}-\alpha^{1\over 1-p}\big)
\over \alpha^{1\over 1-p}q_i+\beta^{1\over 1-p}(1-q_i)}\xi_i\Bigg|^pdsdx\\ \ecart\dis
=\sum_{i=1}^{n_\delta}\int_{Q_{i}} \alpha q_i\Bigg|\nabla u+
{(q_i-1)\big(\beta^{1\over 1-p}-\alpha^{1\over 1-p}\big)
\over \alpha^{1\over 1-p}q_i+\beta^{1\over 1-p}(1-q_i)}\xi_i\Bigg|^pdx\\ \ecart\dis
+\sum_{i=1}^{n_\delta}\int_{Q_{i}} \beta(1- q_i)\Bigg|\nabla u+
{q_i\big(\beta^{1\over 1-p}-\alpha^{1\over 1-p}\big)
\over \alpha^{1\over 1-p}q_i+\beta^{1\over 1-p}(1-q_i)}\xi_i\Bigg|^pdx.\ea$$}
{Thanks to the uniform continuity of $\theta $ and $\nabla u$, we can also take the limit when $\delta$ tends to zero in the right-hand side of the above equality to get
\beq\label{paledt0}\ba{l}\displaystyle \lim_{\delta\to 0}\Bigg(\sum_{i=1}^{n_\delta}\int_{Q_{i}} \alpha q_i\Bigg|\nabla u+
{(q_i-1)\big(\beta^{1\over 1-p}-\alpha^{1\over 1-p}\big)
\over \alpha^{1\over 1-p}q_i+\beta^{1\over 1-p}(1-q_i)}\xi_i\Bigg|^pdx\\ \ecart\dis\qquad
+\sum_{i=1}^{n_\delta}\int_{Q_{i}} \beta(1- q_i)\Bigg|\nabla u+
{q_i\big(\beta^{1\over 1-p}-\alpha^{1\over 1-p}\big)
\over \alpha^{1\over 1-p}q_i+\beta^{1\over 1-p}(1-q_i)}\xi_i\Bigg|^pdx\Bigg)\\ \ecart\dis = 
\int_{\Om}\Bigg(\alpha\theta\Big|1+\frac{(\theta-1)(\beta^{\frac{1}{1-p}}-\alpha^{\frac{1}{1-p}})}{\alpha^{\frac{1}{1-p}}\theta+\beta^{\frac{1}{1-p}}(1-\theta)}\Big|^{p}\\ \ecart\dis \qquad
+\beta(1-\theta)\Big|1+\frac{\theta(\beta^{\frac{1}{1-p}}-\alpha^{\frac{1}{1-p}})}{\alpha^{\frac{1}{1-p}}\theta+\beta^{\frac{1}{1-p}}(1-\theta)}\Big|^{p}\Bigg)|\nabla u|^{p}dx\\ \ecart\dis
=\into \Big(\theta\alpha^{1\over 1-p}+(1-\theta) \beta^{1\over 1-p}\Big)^{1-p} |\nabla u|^pdx.\ea\eeq}\par
{Let us now use that  for $\ep<1$,  $\nabla u_{\delta,\ep}$ is bounded in $L^\infty(\Om)^N$, independently of $\delta$ and $\ep$, and $\chi_{\om_{\delta,\ep}}\in\{0,1\}$. Thus, there exists $C\geq 1$  such that
$$\|\Chi_{\om_{\delta,\ep}}\|_{L^\infty(\Om)}\leq 1,\quad \|\partial_ju_{\delta,\ep}\|_{L^\infty(\Om)}\leq C,\ 1\leq j\leq N,\qquad\forall\,\ep,\delta>0,\ 0<\ep<1.$$
Here, we recall that the closed ball $\overline B_C$ of center 0 and radius $C$ in $L^\infty(\Om)$, endowed with 
the weak-$\ast$ topology is metrizable. Taking $d$ a suitable distance, and using (\ref{convoed}), (\ref{acotmeoed}) and (\ref{conued}), we can choose for every $\delta>0$, $\ep(\delta)>0$ such that
$$d(\Chi_{\om_{\delta,\ep(\delta)}},\theta_\delta)<\delta,\quad |\om_{\delta,\ep(\delta)}|<\kappa,\quad d(\partial_ju_{\delta,\ep(\delta)},\partial_ju)<\delta,\ 1\leq j\leq N,$$
\beq \left|\begin{array}{l}
\displaystyle\int_{\Omega} \big(\alpha\Chi_{\om_{\delta,\ep(\delta)}}+\beta(1-\Chi_{\om_{\delta,\ep(\delta)}})\big)|\nabla u_{\delta,\ep(\delta)}|^pdx \\
\displaystyle -\sum_{i=1}^{n_\delta}\into\alpha q_i\Bigg|\nabla u+
{(q_i-1)\big(\beta^{1\over 1-p}-\alpha^{1\over 1-p}\big)
\over \alpha^{1\over 1-p}q_i+\beta^{1\over 1-p}(1-q_i)}\xi_i\Bigg|^pdx\\
\displaystyle -\sum_{i=1}^{n_\delta}\into\beta(1- q_i)\Bigg|\nabla u+
{q_i\big(\beta^{1\over 1-p}-\alpha^{1\over 1-p}\big)
\over \alpha^{1\over 1-p}q_i+\beta^{1\over 1-p}(1-q_i)}\xi_i\Bigg|^pdx 
\end{array}\right|< \delta.\eeq}
{Then, taking into account (\ref{contd}) and (\ref{paledt0}), we get
$$ \Chi_{\om_{\delta,\ep(\delta)}}\weakStar \theta\mbox{ in }L^{\infty}(\Omega),\quad |\om_{\delta,\ep(\delta)}|<\kappa,\quad u_{\delta,\eps(\delta)}\weakStar u\mbox{ in }W^{1,\infty}(\Omega)\cap W^{1,p}_0(\Om),$$
$$ \lim_{\delta\to 0}\into \big(\alpha\Chi_{\om_{\delta,\ep(\delta)}}+\beta(1-\Chi_{\om_{\delta,\ep(\delta)}}\big)|\nabla u_{\delta,\ep(\delta)}|^pdx = \into \Big(\theta\alpha^{1\over 1-p}+(1-\theta) \beta^{1\over 1-p}\Big)^{1-p} |\nabla u|^pdx.$$}\par
{This proves assertion (4) for  $u$, $\theta$ smooth and $\into \theta\,dx<\kappa$. The general result follows by density.}\end{proof}
\begin{rem}
We can express problem (\ref{Problem:ComplianceRelaxtion}) in a simpler way defining 
\beq\label{defc} c:=\Big(\frac{\beta}{\alpha}\Big)^{1\over p-1}\hskip-5pt -1>0, \qquad\tilde{f}:=\frac{f}{\beta},\eeq
which provides
\begin{equation}
\left\{ \begin{array}{c}
 \displaystyle\min_{\theta,u}\left\{\frac{1}{p}\int_{\Omega}\frac{|\nabla u|^{p}}{(1+c\,\theta)^{p-1}}dx-<\tilde f,u>\right\} \\  \ecart
\displaystyle  u\in W^{1,p}_{0}(\Omega),\quad \theta\in L^{\infty}(\Omega;[0,1]), \quad \int_{\Omega}\theta dx\leq \kappa.
\end{array}  \right.
\label{Relaxed problem:min}
\end{equation}
For simplicity, in the following we will redefine $f$ as $\tilde{f}$.\end{rem}
\section{Uniqueness results and optimality conditions for the relaxed problem}
Since in problem \eqref{Relaxed problem:min} the cost functional is not  strictly convex, the uniqueness of solution is not clear. However, let us prove in Proposition \ref{Teo:def:flux} that the flux 
\begin{equation}
\hat{\sigma}:=\frac{|\nabla \hat{u}|^{p-2}}{(1+c\,\hat\theta)^{p-1}}\nabla \hat{u},
\label{def:flux}
\end{equation}
with $(\hat u,\hat \theta)$ a solution of   \eqref{Relaxed problem:min}  is uniquely defined. The result follows from a dual formulation of \eqref{Relaxed problem:min} as a min-max problem. In the case $p=2$, a similar result has been obtained in \cite{MuTa}.
\begin{prop}
\label{Teo:def:flux}
For every solution $(\hat{u},\hat \theta)\in W^{1,p}_{0}(\Omega)\times L^{\infty}(\Omega;[0,1])$ of \eqref{Relaxed problem:min},  the flux $\hat{\sigma}$ defined by \eqref{def:flux}
is the unique solution of 
\begin{equation}
\min_{\begin{array}{c}
{\scriptstyle -{\rm div}\,\sigma=f }
\\ 
{\scriptstyle\sigma \in L^{p'}(\Omega)^N}
\end{array}}\max_{\begin{array}{c}
{\scriptstyle\theta \in L^{\infty}(\Omega;[0,1])} \\  
{\scriptstyle\int_{\Omega}\theta\,dx\leq \kappa}
\end{array}} \int_{\Omega}(1+c\,\theta)|\sigma|^{p'}dx.
\label{problem:flux min-max problem}
\end{equation}
The function  $\hat{\theta}$ solves the problem 
\begin{equation}
\max_{\begin{array}{c}
{\scriptstyle\theta \in L^{\infty}(\Omega;[0,1]) } \\ 
{\scriptstyle\int_{\Omega}\theta\,dx\leq \kappa}
\end{array}}\min_{\begin{array}{c}
{\scriptstyle-{\rm div}\sigma=f}
\\ 
{\scriptstyle\sigma \in L^{p'}(\Omega)^N} 
\end{array}} \int_{\Omega}(1+c\,\theta)|\sigma|^{p'}dx,
\label{problem:flux max-min problem}
\end{equation} 
and the minimum value in  \eqref{problem:flux min-max problem} agrees with the maximum in \eqref{problem:flux max-min problem}.
\end{prop}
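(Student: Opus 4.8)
The plan is to exhibit the pair $(\hat\sigma,\hat\theta)$ as a saddle point of the Lagrangian $L(\sigma,\theta):=\into(1+c\,\theta)|\sigma|^{p'}dx$ over the product of the two admissible sets appearing in \eqref{problem:flux min-max problem}--\eqref{problem:flux max-min problem}. Once a saddle point is available, the statement that $\hat\sigma$ solves the min-max \eqref{problem:flux min-max problem}, that $\hat\theta$ solves the max-min \eqref{problem:flux max-min problem}, and that the two optimal values coincide (both equal to $L(\hat\sigma,\hat\theta)$), is a purely algebraic consequence of the saddle-point lemma and requires no compactness or minimax theorem. First I would record two identities satisfied by the flux when $(\hat u,\hat\theta)$ solves \eqref{Relaxed problem:min}: inverting the $p$-power map in \eqref{def:flux} gives $\nabla\hat u=(1+c\,\hat\theta)|\hat\sigma|^{p'-2}\hat\sigma$, and since $(p-1)p'=p$ one obtains $|\hat\sigma|^{p'}=|\nabla\hat u|^p/(1+c\,\hat\theta)^p$. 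Moreover, optimality of $\hat u$ for the value $\hat\theta$ fixed is exactly the Euler--Lagrange equation $-{\rm div}\,\hat\sigma=f$, so $\hat\sigma$ is admissible in \eqref{problem:flux min-max problem}.

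Next I would verify the two saddle-point inequalities separately, using that joint optimality of $(\hat u,\hat\theta)$ in \eqref{Relaxed problem:min} forces optimality in $u$ with $\theta=\hat\theta$ fixed and in $\theta$ with $u=\hat u$ fixed. For minimality in $\sigma$, I would note that $\sigma\mapsto L(\sigma,\hat\theta)$ is convex and that for every $\tau\in L^{p'}(\Om)^N$ with ${\rm div}\,\tau=0$ the first variation vanishes, since $\into(1+c\,\hat\theta)|\hat\sigma|^{p'-2}\hat\sigma\cdot\tau\,dx=\into\nabla\hat u\cdot\tau\,dx=0$ by the first identity and $\hat u\in W^{1,p}_0(\Om)$; hence $\hat\sigma$ minimizes $L(\cdot,\hat\theta)$ over $\{-{\rm div}\,\sigma=f\}$. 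For maximality in $\theta$, I would use that $\theta\mapsto(1+c\,\theta)^{1-p}$ is convex (its second derivative equals $p(p-1)c^2(1+c\,\theta)^{-p-1}>0$), so that for $\hat u$ fixed the functional $\Phi(\theta):=\frac1p\into|\nabla\hat u|^p(1+c\,\theta)^{1-p}dx$ is convex on the admissible set; the optimality of $\hat\theta$ then reads $\Phi'(\hat\theta)(\theta-\hat\theta)\geq 0$, and because $\Phi'(\hat\theta)\phi=-\frac{(p-1)c}{p}\into|\hat\sigma|^{p'}\phi\,dx$ (again by the second identity) this is exactly $\into(1+c\,\theta)|\hat\sigma|^{p'}dx\leq\into(1+c\,\hat\theta)|\hat\sigma|^{p'}dx$ for every admissible $\theta$. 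Thus $(\hat\sigma,\hat\theta)$ is a saddle point, which settles the existence assertions and the agreement of the two values.

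Finally, for uniqueness of $\hat\sigma$ I would analyze the reduced functional $\Psi(\sigma):=\max_\theta L(\sigma,\theta)=\into|\sigma|^{p'}dx+c\,\max_\theta\into\theta\,|\sigma|^{p'}dx$, minimized over the affine set $\{-{\rm div}\,\sigma=f\}$. Its second term is a supremum of maps that are convex in $\sigma$ (since $\theta\geq0$ and $\xi\mapsto|\xi|^{p'}$ is convex), hence convex, while the first term $\into|\sigma|^{p'}dx$ is strictly convex because $\xi\mapsto|\xi|^{p'}$ is strictly convex for $p'>1$; therefore $\Psi$ is strictly convex and admits at most one minimizer over a convex set, which must be $\hat\sigma$. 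I expect the delicate point to be the rigorous justification of the two variational characterizations, in particular the Gâteaux differentiability of $\Phi$ and the passage from joint to coordinatewise optimality of $(\hat u,\hat\theta)$, whereas the equality min-max $=$ max-min, which would ordinarily require a minimax theorem, is obtained for free from the explicit saddle point. A useful byproduct is that, the solution of \eqref{problem:flux min-max problem} being unique, the flux $\hat\sigma$ does not depend on the chosen solution $(\hat u,\hat\theta)$ of \eqref{Relaxed problem:min}.
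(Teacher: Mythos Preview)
Your argument is correct and follows essentially the same overall strategy as the paper: exhibit $(\hat\sigma,\hat\theta)$ as a saddle point of $L(\sigma,\theta)=\into(1+c\,\theta)|\sigma|^{p'}dx$, and then deduce uniqueness of $\hat\sigma$ from the strict convexity of $\sigma\mapsto\max_\theta L(\sigma,\theta)$.

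The difference lies in how the saddle-point property is verified. The paper first establishes, for each fixed $\theta$, the Fenchel-type duality between the inner minimization in $\sigma$ and the minimization in $u$ (showing that $\sigma_\theta$ and $u_\theta$ are related by $(1+c\theta)|\sigma_\theta|^{p'-2}\sigma_\theta=\nabla u_\theta$ and that the optimal values differ by a sign and a factor $p'$); this turns the max--min in $(\theta,\sigma)$ into the relaxed problem in $(\theta,u)$ and yields the saddle point. The paper then invokes the von Neumann Min--Max Theorem to conclude that $\hat\sigma$ solves \eqref{problem:flux min-max problem} and that the two values coincide. Your route is more direct: you check the two saddle-point inequalities by hand, using that $(1+c\hat\theta)|\hat\sigma|^{p'-2}\hat\sigma=\nabla\hat u$ kills the first variation in $\sigma$ against divergence-free test fields, and that the first-order optimality of $\hat\theta$ in \eqref{Relaxed problem:min} (for $\hat u$ fixed) rewrites as $\into|\hat\sigma|^{p'}(\theta-\hat\theta)\,dx\leq 0$. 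As you observe, once the saddle point is in hand the elementary saddle-point lemma gives both the min--max and max--min conclusions and their equality; the citation to von Neumann is therefore not needed, and your argument makes this explicit. The paper's detour buys the additional ``if and only if'' between solutions of \eqref{Relaxed problem:min} and of \eqref{problem:flux max-min problem}, but that is not part of the proposition's statement. A small remark: in your maximality step, convexity of $\theta\mapsto(1+c\theta)^{1-p}$ is not actually needed to obtain the inequality $\Phi'(\hat\theta)(\theta-\hat\theta)\geq 0$; that is simply the first-order necessary condition for a minimum over a convex set, and it already suffices since $L(\hat\sigma,\cdot)$ is affine in $\theta$.
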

\begin{proof} For $\theta\in L^\infty(\Om;[0,1])$, we define $\sigma_\theta\in L^{p'}(\Omega)^{N}$ as the unique solution of
\[\min_{\begin{array}{c}
{\scriptstyle -{\rm div}\sigma=f}
\\ 
{\scriptstyle\sigma \in L^{p'}(\Omega)^N}
\end{array}} \int_{\Omega}(1+c\,\theta)|\sigma|^{p'}dx.\]
The uniqueness of $\sigma_{\theta}$ is ensured by the strictly convexity of the problem. Then, taking into account that $\sigma_\theta$ satisfies
$$p'\into (1+c\theta)|\sigma_\theta|^{p'-2}\sigma_\theta\cdot \eta\,dx=0,\quad \forall\, \eta\in L^{p'}(\Om),\ \hbox{ with }{\rm div}\,\eta=0,$$
we deduce the existence of $u_\theta\in W^{1,p}_0(\Om)$ such that $(1+c\theta)|\sigma_\theta|^{p'-2}\sigma_\theta=\nabla u_\theta$ in $\Om$. Using also that $-{\rm div}\,\sigma_\theta=f$ in $\Om$, we get that $u_\theta$ is the unique solution of
$$-{\rm div}\left({|\nabla u_\theta|^{p-2}\over (1+c\theta)^{p-1}} \nabla u_\theta\right)=f\ \hbox{ in }\Om,\quad u_\theta\in W^{1,p}_0(\Om),$$
or equivalently, of the minimization problem
$$\min_{u\in W_{0}^{1,p}(\Omega)}\left\{\frac{1}{p} \int_{\Omega}\frac{|\nabla u|^{p}}{(1+c\,\theta)^{p-1}}dx-\langle f,u\rangle\right\},$$
which combined with
$$\frac{1}{p} \int_{\Omega}\frac{|\nabla u_\theta|^{p}}{(1+c\,\theta)^{p-1}}dx-\langle f,u_\theta \rangle=-{1\over p'} \int_{\Omega}(1+c\,\theta)|\sigma_\theta|^{p'}dx,$$
proves that $(\hat u,\hat\theta)$ is a solution of \eqref{Relaxed problem:min} if and only if $\hat \theta$ is a solution of the {max-min} problem \eqref{problem:flux max-min problem}, and $(\hat\theta,\hat\sigma)$, with $\hat \sigma$ defined by \eqref{def:flux},  is a saddle point. From the von Neumann Min-Max Theorem  { \cite[Theorem~2.G and Proposition 1 in Chapter 2]{Zeidler}}, we get that the minimum in \eqref{problem:flux min-max problem} agrees with the maximum in  \eqref{problem:flux max-min problem}, and
 that $\hat\sigma$ is a solution of (\ref{problem:flux min-max problem}).
 Taking into account that the functional 
\begin{align*}
\sigma\in L^{p'}(\Om)^N\mapsto \max_{\begin{array}{c}
{\scriptstyle \theta \in L^{\infty}(\Omega;[0,1])} \\  
{\scriptstyle \int_{\Omega}\theta dx\leq \kappa}
\end{array}} \int_{\Omega}(1+c\,\theta)|\sigma|^{p'}dx
\end{align*}
is strictly convex, as a maximum of a family of strictly convex functions, we deduce the uniqueness of  $\hat{\sigma}$. 
\end{proof}\par\noindent
The following  theorem provides a system of optimality conditions for the convex problem \eqref{Problem:ComplianceRelaxtion}. It proves in particular that $\hat u$ is the solution of a nonlinear Calculus of Variations problem  which does not contain the proportion $\hat \theta$. We refer to Section 4 in \cite{GoKoRe} for a related result in the case $p=2$.
\begin{theorem}
\label{Teo:problema:solou} A pair
 $(\hat u,\hat\theta)\in W^{1,p}_{0}(\Omega)\times L^{\infty}(\Omega;[0,1])$ is a solution of \eqref{Relaxed problem:min} if and only if there exists  $\hat\mu\geq 0$ such that $\hat u$ is a solution of
 \begin{equation}
 \min_{u\in W_{0}^{1,p}(\Omega) }\left(\int_{\Omega}F(|\nabla u|)dx-\left<f,u\right> \right),
 \label{Problema:solou}
 \end{equation}
with $F\in C^1([0,\infty))\cap W^{2,\infty}_{loc}(0,\infty)$, the convex function defined by 
 \begin{equation} \label{Definicion:F} 
F(0)=0,\qquad  F'(s)=\left\{\begin{array}{cl}
s^{p-1} & \mbox{ if }\ 0\leq s < \hat{\mu}  \\ \ecart
\hat{\mu}^{p-1} & \mbox{ if }\ \hat{\mu} \leq s \leq (1+c)\hat\mu\\ \ecart
\displaystyle {s^{p-1}\over (1+c)^{p-1}} & \mbox{ if }\ (1+c)\hat\mu<s,
\end{array}  \right.
 \end{equation}
and $\hat\mu$, $\hat\theta$ are related by
\begin{itemize}
\item If $\hat\mu=0$ then
 \beq\label{condmu0} \hat\theta=1\ \hbox{ a.e. in }\ \big\{|\nabla\hat u|>0\big\},\quad \into\hat \theta\,dx\leq \kappa.\eeq
 \item If $\hat{\mu}>0$, then 
 \begin{equation}
\hat{\theta}=\left\{\begin{array}{cl}
0 & \mbox{ if }\ 0\leq |\nabla \hat{u}|<\hat\mu\\ \ecart\displaystyle
\frac{1}{c}\left({|\nabla\hat{u}|\over\hat\mu}-1\right) & \mbox{ if }\ \hat \mu \leq |\nabla \hat{u}| < (1+c)\hat \mu \\ \ecart
1 & \mbox{ if }\ (1+c)\hat \mu<|\nabla \hat{u}|,
 \end{array}\right. \qquad \qquad\into\hat\theta\,dx=\kappa.
 \label{characterization:thetha:mupos}\end{equation}
 \end{itemize}
\end{theorem}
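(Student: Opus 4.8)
The plan is to exploit the joint convexity of the relaxed problem and to characterize its minimizers through Karush--Kuhn--Tucker conditions, the scalar $\hat\mu$ playing the role of a power of the Lagrange multiplier attached to the volume constraint $\into\theta\,dx\leq\kappa$. First I would record that the integrand $(\nabla u,\theta)\mapsto |\nabla u|^p/(1+c\theta)^{p-1}$ is jointly convex, being the composition of the convex function $J(\xi,t)=|\xi|^p/t^{p-1}$ of \eqref{convfun} with the affine map $(\nabla u,\theta)\mapsto(\nabla u,1+c\theta)$; together with the convexity of the admissible set this makes \eqref{Relaxed problem:min} a convex problem, so that a feasible pair is a global minimizer exactly when it satisfies the first--order optimality conditions. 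Since $\kappa<|\Om|$, the constant $\theta\equiv\kappa/(2|\Om|)$ is strictly feasible, so Slater's condition holds and there is a multiplier $\lambda\geq 0$ for the constraint $\into\theta\,dx\leq\kappa$: the pair $(\hat u,\hat\theta)$ solves \eqref{Relaxed problem:min} if and only if it minimizes the Lagrangian
\begin{equation*}
\frac1p\into\frac{|\nabla u|^p}{(1+c\theta)^{p-1}}\,dx-\langle f,u\rangle+\lambda\into\theta\,dx
\end{equation*}
over $W^{1,p}_0(\Om)\times L^\infty(\Om;[0,1])$, subject to the complementary slackness relation $\lambda\big(\into\hat\theta\,dx-\kappa\big)=0$.

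Next I would eliminate $\theta$ by pointwise minimization. For a fixed value $s=|\nabla u(x)|$, the scalar map $\theta\mapsto \frac1p\,s^p(1+c\theta)^{1-p}+\lambda\theta$ is convex on $[0,1]$, and an elementary computation of its derivative locates the minimizer: setting $\hat\mu:=\big(p\lambda/((p-1)c)\big)^{1/p}$, so that $\hat\mu=0$ corresponds to $\lambda=0$, the minimizer is $\theta=0$ for $s\leq\hat\mu$, the interior value $\theta=\frac1c(s/\hat\mu-1)$ for $\hat\mu<s<(1+c)\hat\mu$, and $\theta=1$ for $s\geq(1+c)\hat\mu$. This is precisely the characterization \eqref{characterization:thetha:mupos}, respectively \eqref{condmu0} when $\hat\mu=0$, where any $\theta$ is optimal on $\{|\nabla\hat u|=0\}$ while $\theta=1$ is forced on $\{|\nabla\hat u|>0\}$. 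Substituting this optimal $\theta$ back, a short calculation --- checking the three regimes together with the matching of values and first derivatives at $s=\hat\mu$ and at $s=(1+c)\hat\mu$ --- shows that the minimized integrand equals $F(|\nabla u|)$, with $F$ the function \eqref{Definicion:F}; in particular $F$ is convex, $C^1$, and locally $W^{2,\infty}$ off the two breakpoints. Hence, after eliminating $\theta$, the Lagrangian reduces to $\into F(|\nabla u|)\,dx-\langle f,u\rangle$ up to the additive constant $-\lambda\kappa$, and minimizing over $u$ is exactly problem \eqref{Problema:solou}.

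Putting the two reductions together yields the stated equivalence in both directions: $(\hat u,\hat\theta)$ solves \eqref{Relaxed problem:min} if and only if $\hat u$ minimizes \eqref{Problema:solou} for the $\hat\mu$ attached to the multiplier and $\hat\theta$ is the pointwise minimizer just described, the slackness condition translating into $\into\hat\theta\,dx=\kappa$ when $\hat\mu>0$ and into the weaker $\into\hat\theta\,dx\leq\kappa$ when $\hat\mu=0$. I expect the main obstacle to be the rigorous justification of the multiplier step in the infinite--dimensional setting: that Slater's condition legitimately produces a single scalar $\lambda\geq0$ for which minimizing the Lagrangian, under the pointwise constraint $\theta\in[0,1]$ only, is equivalent to solving the constrained problem, and that the minimization over $\theta\in L^\infty(\Om;[0,1])$ may be performed at almost every point of $\Om$. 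The remaining subtlety is the non-uniqueness of the optimal $\theta$ on $\{|\nabla\hat u|=0\}$ in the case $\hat\mu=0$, which is precisely why \eqref{condmu0} only prescribes $\hat\theta$ where $|\nabla\hat u|>0$.
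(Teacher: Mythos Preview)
Your proposal is correct and follows essentially the same approach as the paper: both invoke the Kuhn--Tucker conditions for the convex problem \eqref{Relaxed problem:min} to obtain a scalar multiplier for the volume constraint (the paper writes it directly as $c\hat\mu^p/p'$, matching your $\lambda$ via $\hat\mu=(p\lambda/((p-1)c))^{1/p}$), then minimize pointwise in $\theta$ to recover \eqref{condmu0}/\eqref{characterization:thetha:mupos} and substitute back to identify the integrand with $F$. The paper's only cosmetic difference is that it records the optimality in $\theta$ as the variational inequality \eqref{ecuath} before reading off the explicit formula, whereas you compute the pointwise minimizer directly.
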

\begin{proof}  Applying Kuhn-Tucker's theorem to the convex problem \eqref{Problem:ComplianceRelaxtion}, we get that
$(\hat u,\hat\theta)$ is a solution if and only if there exists  $\hat\mu\geq 0$ such that $(\hat u,\hat \theta)$ solves
\begin{equation} \label{KTRelaxedp}
\min_{\ba{c}{\scriptstyle u\in W^{1,p}_0(\Om)}\\{\scriptstyle\theta\in L^\infty(\Om;[0,1])}\ea} \left\{\into\Big({1\over p}\frac{|\nabla u|^{p}}{(1+c\,\theta)^{p-1}}+{c\hat\mu^p\over p'}\theta\Big)dx-<f,u>\right\},\end{equation}
and
\beq \label{KTRelaxedp2}
\into\hat \theta\,dx\leq \kappa,\qquad \hat \mu\left(\into\hat\theta\,dx-\kappa\right)=0.\eeq
Differentiating in (\ref{KTRelaxedp}) we have that $(\hat u,\hat\theta)$ is a solution of (\ref{KTRelaxedp}) if and only if 
\beq\label{ecuauh} \into {|\nabla \hat u|^{p-2}\nabla \hat u\cdot\nabla \hat v\over (1+c\hat\theta)^{p-1}}dx=\langle f,v\rangle,\quad\forall\, v\in W^{1,p}_0(\Om),\eeq
\beq\label{ecuath}
\into \Big(\hat\mu^p-{|\nabla\hat u|^p\over (1+c\hat\theta)^p}\Big)\big(\theta-\hat\theta\big)dx\geq 0,\quad\forall\, \theta\in L^\infty(\Om;[0,1]).\eeq
Condition \eqref{ecuauh} is equivalent to $\hat u$ solution of the minimum problem
 \begin{equation}\label{Promiuco}
 \min_{u\in W_{0}^{1,p}(\Omega) }\left\{{1\over p}\int_{\Omega}{|\nabla u|^p\over (1+c\hat\theta)^{p-1}}dx-\left<f,u\right> \right\},
 \eeq
while \eqref{ecuath} is equivalent to $\hat\theta$ satisfying \eqref{condmu0} or \eqref{characterization:thetha:mupos} depending on whether $\hat\mu=0$ or  $\hat\mu>0$.
Replacing this value of $\hat\theta$ in \eqref{KTRelaxedp} we have the equivalence between \eqref{Promiuco} and \eqref{Problema:solou}. 
\end{proof}
\begin{rem}\label{teo:characterization:theta} Using  \eqref{condmu0} or \eqref{characterization:thetha:mupos} and expression \eqref{def:flux} of $\hat\sigma$, we have that $\hat\theta$ satisfies
\begin{equation}
\hat{\theta}(x)=\left\{\begin{array}{cc}
1 & \mbox{ if }\ |\hat{\sigma}|>\hat{\mu}\\ \ecart
0 & \mbox{ if }\ |\hat{\sigma}|<\hat{\mu}.
\end{array}\right.
\label{characterization:theta}
\end{equation} 
Moreover, Theorem \ref{Teo:problema:solou} implies $\hat\mu=0$ if and only if the unique solution $\tilde u$ of
$$ \min_{u\in W_{0}^{1,p}(\Omega) }\left\{{1\over p}\into {|\nabla u|^p\over (1+c)^{p-1}}dx-\left<f,u\right> \right\},
$$
satisfies
$$\big|\{x\in\Om:\ |\nabla \tilde u|>0\big\}\big|\leq \kappa,$$
where in this case $\hat u=\tilde u$.
\end{rem}
\section{Regularity for the relaxed problem}
In the present section we study the regularity of the solutions of problem \eqref{Relaxed problem:min}. As a consequence we show that the unrelaxed problem \eqref{CompMin} has no solution in general. We begin by stating the main results. The corresponding proofs are given later.
\begin{theorem} \label{ThprReg}
Let $\Omega\subset\R^N$ be a $C^{1,1}$ bounded  open set  and $(\hat u,\hat\theta)\in W^{1,p}_{0}(\Omega)\times L^{\infty}(\Omega;[0,1])$ be a solution of \eqref{Relaxed problem:min}, then, for  $\hat\sigma$ defined by \eqref{def:flux} and  $\hat\mu$ given by Theorem \ref{Teo:problema:solou} we have: 
\begin{enumerate}
\item If $f\in W^{-1,q}(\Omega)$, $p'\leq q<\infty$, then $\nabla\hat{u}\in L^{q(p-1)}(\Omega)^N$ and  there exists $C>0,$ which only depends on $p,q,N$ and $\Omega$ such that 
\begin{equation}
\label{teo:reg:cond1}
\norm{\nabla \hat{u}}_{L^{q(p-1)}(\Omega)^N}\leq C\big(\|f\|_{W^{-1,q}(\Om)}^{1\over p-1}+\hat \mu\big).
\end{equation}
\item If $f\in L^{q}(\Omega)$ with $q>N,$  then there exists $C>0$ which only depends  on $p,q,N$ and $\Om$ such that 
\begin{equation}
\norm{\nabla \hat{u}}_{L^{\infty}(\Omega)^N}\leq C\big(\|f\|_{L^q(\Om)}^{1\over p-1}+\hat \mu\big).\label{teo:reg:cond2}
\end{equation} 
\item If $f\in W^{1,1}(\Om)\cap L^{2(1+r)}(\Om),$ with $r\geq 0$ or $f\in W^{1,2(1+r)}(\Om)$ with $r\in (-1/2,0)$, then the function $|\hat\sigma|^r\hat\sigma$ is in $H^1(\Om)^N$ and there exists $C>0$, which only depends on $p,q,N,\hat\mu$ and $\Om$ such that
\begin{equation}
\label{teo:reg:cond3}
\big \||\hat\sigma|^r\sigma\big\|_{H^1(\Omega)^N}\leq\left\{\ba{ll}\dis  C\left(1+\norm{f}_{W^{1,1}(\Omega)}+\norm{f}_{L^{2(1+r)}(\Omega)}^{2(1+r)}\right)&\hbox{if } r\geq 0\\ \ecart\dis   C\left(1+\norm{f}_{W^{1,2(1+r)}(\Omega)}\right)&\hbox{if }  -{1\over 2}<r<0.
\ea\right.
\end{equation}
Moreover
\begin{equation}
\label{teo:reg:cond3b}
\hat \sigma\ \hbox{ is parallel to }\nu \hbox{ on }\partial\Om,
\end{equation}
with $\nu$ the unitary outside normal to $\partial\Om.$
\item For $1\leq i,j\leq N$ and $f\in W^{1,1}(\Om)\cap L^2(\Om)$ 
\beq\label{defetaij}\partial_i\hat\theta\hat\sigma_{j}-\partial_j\hat\theta\hat\sigma_i=(1+c\hat\theta)(\partial_j\hat\sigma_i-\partial_i\hat\sigma_j)\Chi_{\{|\hat\sigma|=\hat\mu\}}\in L^2(\Om).\eeq
Moreover, if $\hat\theta$ only takes a finite number of values a.e. in $\Om$, then
\beq\label{detevan}\partial_i\hat\theta\hat\sigma_{j}-\partial_j\hat\theta\hat\sigma_i=0,\quad 1\leq i,j\leq N,\quad {\rm curl}(|\hat\sigma|^{p'-2}\hat\sigma)=0\ \hbox{ in }\Om.\eeq
where, for a distribution from $\Om$ into $\R^N$,  the ${\rm curl}$ operator is defined as ${\rm curl}(\Phi):=\frac{1}{2}\left(\nabla \Phi-\nabla \Phi^{\top} \right)$.
\end{enumerate}
\end{theorem}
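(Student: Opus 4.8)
The plan is to reduce every assertion to the scalar problem of Theorem \ref{Teo:problema:solou}. By that theorem $\hat u$ minimizes \eqref{Problema:solou}, so its Euler--Lagrange equation reads $-{\rm div}\,\hat\sigma=f$ with $\hat\sigma=F'(|\nabla\hat u|)\,\nabla\hat u/|\nabla\hat u|$; using the relation between $\hat\theta$ and $|\nabla\hat u|$ from \eqref{characterization:thetha:mupos} one checks that this $\hat\sigma$ is exactly \eqref{def:flux}, that $|\hat\sigma|=F'(|\nabla\hat u|)$, and that the constitutive law inverts to $\nabla\hat u=(1+c\hat\theta)|\hat\sigma|^{p'-2}\hat\sigma$. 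The two structural facts I would use throughout are that, by \eqref{Definicion:F}, $F'(s)$ is squeezed between $(1+c)^{1-p}s^{p-1}$ and $s^{p-1}$, so that $F$ has the same $p$-growth and ellipticity as $|\xi|^p/p$ up to $\hat\mu$-independent constants, and that $|\hat\sigma|$ is constant on the mixed region $\{0<\hat\theta<1\}$. I would also record the pointwise bound $|\nabla\hat u|\le (1+c)(|\hat\sigma|^{1/(p-1)}+\hat\mu)$, which transfers integrability from $\hat\sigma$ to $\nabla\hat u$.

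Parts (1) and (2) then follow from the regularity theory for operators of $p$-Laplacian type applied to the fixed integrand $F$. For (1), writing $f=-{\rm div}\,g$ with $\|g\|_{L^q}\simeq\|f\|_{W^{-1,q}}$, the Calder\'on--Zygmund estimate for the monotone operator $\xi\mapsto F'(|\xi|)\xi/|\xi|$ yields $\hat\sigma\in L^q$, whence $\nabla\hat u\in L^{q(p-1)}$ with \eqref{teo:reg:cond1} via the pointwise inequality above. For (2), with $f\in L^q$ and $q>N$, I would invoke Lieberman-type interior and boundary gradient bounds (the results underlying \cite{lou2008singular}) to get $\nabla\hat u\in L^\infty$; the $\hat\mu$-independence of the multiplicative constant comes from the squeezing of $F'$, while the additive $\hat\mu$ accounts for the flat middle branch of $F'$.

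The heart of the theorem is part (3), where I expect the real difficulty. I would argue by approximation: replace $F'$ by a smooth, strictly increasing, nondegenerate approximation $F'_\varepsilon$, so that the minimizer $u_\varepsilon$ is smooth, prove \eqref{teo:reg:cond3} uniformly in $\varepsilon$, and pass to the limit. The uniform bound would come from a second-order (Bochner--Nirenberg) estimate: differentiate $-{\rm div}\,\sigma_\varepsilon=f$, test the differentiated equation against a field built from $|\sigma_\varepsilon|^{2r}\sigma_\varepsilon$ (or use tangential difference quotients), and use the monotonicity of $F'_\varepsilon$ to produce a coercive term controlling $\nabla(|\sigma_\varepsilon|^r\sigma_\varepsilon)$ in $L^2$; the right-hand side is estimated by pairing $\nabla f$ against the already bounded $\nabla u_\varepsilon$, which forces the $W^{1,1}$ hypothesis, together with a lower-order term carrying $\|f\|_{L^{2(1+r)}}$ (respectively $\|f\|_{W^{1,2(1+r)}}$). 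The delicate point, and the main obstacle, is the boundary contribution: integrating by parts twice on the $C^{1,1}$ domain produces a surface integral weighted by the second fundamental form of $\partial\Om$. I would control it by noting that $\hat u=0$ on $\partial\Om$ and $\nabla\hat u\in L^\infty$ force $\nabla\hat u$, and hence $\hat\sigma$, to be normal to $\partial\Om$; this both yields \eqref{teo:reg:cond3b} and makes the boundary term explicitly computable through the bounded curvature. Keeping all constants uniform in $\varepsilon$ simultaneously in the degenerate range $p>2$ and the singular range $p<2$, and across the admissible weights $r$, is what I expect to demand the most care.

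Finally, part (4) is a direct consequence of differentiating the inverted constitutive law $\nabla\hat u=(1+c\hat\theta)|\hat\sigma|^{p'-2}\hat\sigma$ and using that $\nabla\hat u$ is curl-free, i.e. $\partial_j[(1+c\hat\theta)|\hat\sigma|^{p'-2}\hat\sigma_i]=\partial_i[(1+c\hat\theta)|\hat\sigma|^{p'-2}\hat\sigma_j]$. Restricting to the mixed set $\{|\hat\sigma|=\hat\mu\}$, where $|\hat\sigma|$ is constant so that every term carrying $\nabla|\hat\sigma|$ vanishes a.e., the surviving terms give \eqref{defetaij}; its $L^2$ membership is immediate from part (3) with $r=0$, whose hypothesis is exactly $f\in W^{1,1}\cap L^2$, since there $\hat\sigma\in H^1$ and so ${\rm curl}\,\hat\sigma\in L^2$. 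If $\hat\theta$ takes only finitely many values, then $\nabla\hat\theta=0$ a.e.; as \eqref{defetaij} already places the left-hand side in $L^2$, ruling out any singular contribution along the interfaces, that left-hand side vanishes identically, which forces ${\rm curl}\,\hat\sigma=0$ on the mixed set. On each pure phase $\{\hat\theta=0\}$ and $\{\hat\theta=1\}$ the factor $1+c\hat\theta$ is constant, so the constitutive law exhibits $|\hat\sigma|^{p'-2}\hat\sigma=\nabla\hat u/(1+c\hat\theta)$ as a gradient there; combining the two regimes gives ${\rm curl}(|\hat\sigma|^{p'-2}\hat\sigma)=0$ in $\Om$, which is \eqref{detevan}.
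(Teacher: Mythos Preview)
Your overall architecture matches the paper's: reduce to the scalar problem \eqref{Problema:solou}, prove parts (1)--(3) by regularizing $F$ to a smooth strictly convex $F_\varepsilon$, obtaining a priori estimates (the paper isolates these in a separate lemma) via Calder\'on--Zygmund, De Giorgi iteration, and a Bochner identity with boundary terms handled exactly as you describe, then passing to the limit. The paper's proof of (3) is organized around the quantity $\int |\nabla u|^\gamma\big(G'(|\nabla u|)|\nabla u|^{-1}|\nabla^2u\nabla u|^2+G(|\nabla u|)|\nabla^2u|^2\big)$ rather than directly around $\nabla(|\sigma|^r\sigma)$, but the translation is routine and your sketch captures the idea.

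The genuine gap is in part (4). Two issues:

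\medskip
\noindent\textbf{(i)} The derivation of \eqref{defetaij} cannot proceed by naively ``restricting to the mixed set'' and applying the product rule: $\hat\theta$ is only $L^\infty$, so $\partial_i\hat\theta$ is a priori only a distribution, and the product $\partial_i\hat\theta\,\hat\sigma_j$ must be interpreted as $\partial_i(\hat\theta\hat\sigma_j)-\hat\theta\,\partial_i\hat\sigma_j$. The paper makes the computation rigorous by first inserting a cutoff $\Phi\in C^\infty_c(0,\infty)$ with $\Phi\equiv 1$ near $\hat\mu$, so that every factor is a bona fide $L^2$ or $H^1$ function, and then showing separately that the expression vanishes on $\{|\hat\sigma|\neq\hat\mu\}$ via an approximation argument using that $\hat\theta$ is constant (0 or 1) there. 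Your sketch omits both steps.

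\medskip
\noindent\textbf{(ii)} More seriously, the sentence ``if $\hat\theta$ takes only finitely many values, then $\nabla\hat\theta=0$ a.e.'' is not a valid argument. A characteristic function has distributional gradient supported on the interface, not zero; saying the combination lies in $L^2$ ``rules out singular contributions'' does not show the $L^2$ function itself is zero. What you need is the nontrivial fact that if $\partial_i\hat\theta\,\hat\sigma_j-\partial_j\hat\theta\,\hat\sigma_i\in L^2(\Omega)$, then this function vanishes a.e.\ on every level set $\{\hat\theta=c\}$; this is a transport-equation/renormalization statement (the paper invokes Proposition~2.1 of \cite{Bern}), and it does not follow from your heuristic. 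Without it, \eqref{detevan} is unproved.
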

\begin{rem} As in \cite{Cas2} we can also obtain some local regularity results for $\hat u$, $\hat\theta$ and $\hat \sigma$ but, for the sake of simplicity, we have preferred to only state and prove the global regularity result.
\end{rem}
\begin{rem} \label{remSoUn} If we assume that $f$ belongs to $W^{1,1}(\Om)\cap L^2(\Om)$, that the unrelaxed problem \eqref{CompMin} has a solution $(\hat u,\hat\theta)$, and that $\Om$ is simply connected, then (\ref{detevan}) proves the existence of $w\in W^{1,p}(\Om)$ such that $\hat\sigma=|\nabla w|^{p-2}\nabla w$ a.e in $\Om$. By \eqref{teo:reg:cond3b}, we must also have $\hat u$ constant in each connected component of $\partial\Om.$ Assuming then that $\partial\Om$ has only a connected component and taking into account that $w$ is defined up to an additive constant, we get
\beq\label{ConsesigUP} \hat\sigma=|\nabla w|^{p-2}\nabla w,\quad w\hbox{ solution of }\ \left\{\ba{l}\dis -{\rm div}\,( |\nabla w|^{p-2}\nabla w)=f\ \hbox{ in }\Om\\ \ecart\dis w=0\ \hbox{ on }\partial\Om.\ea\right.
\eeq
We will show that this implies that the unrelaxed problem has no solution in general.
\end{rem}
 \begin{theorem}
\label{teo:nonexistence}
Let $\Omega\subset\R^N$ be a connected open set of class $C^{1,1}$ with connected boundary and $f=1$. If there exists a solution of \eqref{Problema:Original}, then $\Omega$ is a ball.
\end{theorem}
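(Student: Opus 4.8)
The strategy is to use the already established regularity and optimality results to turn a solution of \eqref{Problema:Original} into an overdetermined boundary value problem for a $p$-torsion function, and then to invoke a Serrin-type symmetry argument. First I would record the structural facts. A solution of \eqref{Problema:Original} yields, via Theorem~\ref{ThRelax}, a solution $(\hat u,\hat\theta)$ of \eqref{Relaxed problem:min} with $\hat\theta=\Chi_\omega\in\{0,1\}$. Since $f=1$, the all-$\alpha$ competitor $\tilde u$ has $|\nabla\tilde u|>0$ a.e., so $|\{|\nabla\tilde u|>0\}|=|\Om|>\kappa$; by Remark~\ref{teo:characterization:theta} this excludes $\hat\mu=0$, hence $\hat\mu>0$, $\int_\Om\hat\theta\,dx=\kappa$, and the two phases are genuinely present, separated by the interphase $\Gamma:=\partial\omega\cap\Om$ on which, by \eqref{characterization:theta}, $|\hat\sigma|$ takes the threshold value. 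As $\hat\theta$ assumes only the two values $0,1$, Theorem~\ref{ThprReg}(4) applies and gives both the curl-free relation in \eqref{detevan} and $\partial_i\hat\theta\,\hat\sigma_j-\partial_j\hat\theta\,\hat\sigma_i=0$; together with \eqref{teo:reg:cond3b} and the connectedness of $\partial\Om$, Remark~\ref{remSoUn} then provides \eqref{ConsesigUP}, i.e. a potential $w$ with $\hat\sigma=|\nabla w|^{p-2}\nabla w$, $-\mathrm{div}(|\nabla w|^{p-2}\nabla w)=1$ in $\Om$, and $w=0$ on $\partial\Om$.

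Next I would extract the geometry on $\Gamma$. The identity $\partial_i\hat\theta\,\hat\sigma_j=\partial_j\hat\theta\,\hat\sigma_i$ for $\hat\theta=\Chi_\omega$ says that the distributional gradient of $\Chi_\omega$, carried by $\Gamma$, is parallel to $\hat\sigma$; equivalently $\hat\sigma$, hence $\nabla w$, is normal to $\Gamma$. Therefore $w$ is constant on $\Gamma$, say $w\equiv w_0$, while $|\hat\sigma|=|\nabla w|^{p-1}$ being constant on $\Gamma$ gives $|\nabla w|\equiv k_0>0$ there. Since $f=1$ forces $w>0$ in $\Om$ with an interior maximum where $\nabla w=0$, the component $D$ of $\{|\nabla w|<k_0\}=\{\hat\theta=0\}$ containing that maximum is a nonempty interior region with $\partial D\subset\Gamma$, and near $\partial D$ one has $w>w_0$. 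Thus $v:=w-w_0>0$ solves $-\mathrm{div}(|\nabla v|^{p-2}\nabla v)=1$ in $D$ with the constant Cauchy data $v=0$ and $|\partial v/\partial\nu|=k_0$ on $\partial D$. The $p$-Laplacian analogue of Serrin's overdetermined theorem then forces $D$ to be a ball $B_{r_0}(x_0)$ and $w$ to be radial about $x_0$ in $D$.

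Finally I would propagate the symmetry outward. Let $W(|x-x_0|)$ be the radial solution of the ordinary differential equation associated with $-\mathrm{div}(|\nabla W|^{p-2}\nabla W)=1$ with $W(r_0)=w_0$, $W'(r_0)=-k_0$; it is well defined and, because $k_0>0$, nondegenerate, and its gradient $|W'(r)|$ stays bounded below by $k_0$ for $r>r_0$ up to the unique radius $R$ with $W(R)=0$. In the outer phase $\omega=\{|\nabla w|>k_0\}$ the operator is uniformly elliptic, the gradient being bounded away from zero, so by uniqueness of the Cauchy problem for this quasilinear equation $w$ coincides with $W(|x-x_0|)$ in a neighbourhood of $\Gamma$, and by continuation along the component of $\omega$ adjacent to $\Gamma$ throughout that component up to where $w=0$. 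Since $W$ vanishes exactly on the sphere $\{|x-x_0|=R\}$, we obtain $\partial\Om=\{w=0\}=\{|x-x_0|=R\}$, and by connectedness of $\Om$ and of $\partial\Om$ this is all of $\partial\Om$; hence $\Om=B_R(x_0)$ is a ball.

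\textbf{Main obstacle.} The delicate points are the last two. One must justify that $\Gamma$ is regular enough for the distributional identity to carry the stated geometric meaning and for Serrin's method to apply (it is a level set of $|\nabla w|$ with value $k_0>0$, hence locally a $C^1$ hypersurface away from the critical points of $|\nabla w|$), and one must control the global topology so that the component of $\omega$ adjacent to $\Gamma$ indeed reaches every piece of $\partial\Om$, for which the connectedness of $\Om$ and of $\partial\Om$ are used. The heart of the matter is combining a Serrin-type symmetry result for the \emph{degenerate} operator on the interior region $D$ with uniqueness of continuation in the \emph{nondegenerate} outer region $\omega$; once the radial profile has been transported across $\Gamma$ the conclusion is immediate.
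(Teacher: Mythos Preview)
Your overall strategy coincides with the paper's: reduce to \eqref{ConsesigUP} via Remark~\ref{remSoUn}, apply a Serrin-type overdetermined result on a subdomain bounded by the interface $\{|\nabla w|^{p-1}=\hat\mu\}$, and then propagate the radial symmetry to all of $\Omega$. The execution differs in two places worth noting. First, to pin down the subdomain and its boundary the paper does not go through the distributional normality of $\hat\sigma$ to $\Gamma$; instead it shows directly, by differentiating $-\mathrm{div}(|\nabla w|^{p-2}\nabla w)=1$, that every level set $\{|\nabla w|=\lambda\}$ with $\lambda>0$ has Lebesgue measure zero, and then refers to \cite{Cas3} for the facts that a suitable component $O$ satisfies $O\Subset\Omega$ with connected analytic boundary on which both $w$ and $\partial_\nu w$ are constant. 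Second, and more substantively, for the propagation step the paper uses that $w$ is real-analytic in $\{|\nabla w|>0\}$ (classical for nondegenerate $p$-Laplace equations, cf.\ \cite{Mor}) and invokes the unique continuation principle for analytic functions; this is cleaner and easier to cite than uniqueness of the Cauchy problem for the quasilinear equation that you invoke in the outer phase. Your normality argument is a valid and more explicit alternative to the black-box reference to \cite{Cas3}; an equivalent way to see that $w$ is constant on each component of $\Gamma$ is to observe $\nabla\hat u=(1+c\hat\theta)\nabla w$, so $\hat u-(1+c)w$ and $\hat u-w$ are locally constant in $\omega$ and in $\Omega\setminus\omega$ respectively, and the $W^{1,p}$-continuity of $\hat u$ across $\Gamma$ forces $cw$ to be constant there.
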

\begin{rem} In the case $p=2$, Theorem \ref{teo:nonexistence} has been proved in \cite{MuTa} assuming that \eqref{Problema:Original} has a smooth solution and in  \cite{Cas2} in the general case.
\end{rem}
The proof of Theorem \ref{ThprReg} will follow from the following Lemma.
\begin{lemma}\label{lemma:reg} Let $\Omega\subset\R^N$ be a $C^{2}$ bounded open set  and $G:[0,\infty)\to[0,\infty)$ be a $C^1$ function such that there exist $\lambda,\mu>0$ and $p>1$ satisfying
\begin{equation}
\label{lemma:reg:hipd1}
G(s)=s^{p-2},\qquad \forall\,s\geq \mu,
\end{equation}
\begin{equation}
\label{lemma:reg:hipd2}
0\leq G(s)+G'(s)s,\quad G(s)\leq \lambda s^{p-2},\qquad  \forall\,s\geq 0.
\end{equation}
Let  $u\in C^{2}(\overline{\Omega})$ be such that there exists $f\in C^{1,1}(\overline{\Omega}$) satisfying
\begin{equation}
\label{lemma:reg:eq:1}
-{\rm div}\,\Big(G(|\nabla u|)\nabla u\Big)=f \mbox{ in }\Omega,\quad u=0\ \hbox{ on }\partial\Om.
\end{equation}
Then, the following estimates hold:
\begin{enumerate}
\item For every $q\in (p',\infty)$, there exists $C>0$ depending only on $p$, $q$  and $\Om$, such that 
\begin{equation}
\label{lemma:reg:cond1}
\norm{\nabla u}_{L^{q(p-1)}(\Omega)^N}\leq C\Big(\norm{f}_{W^{-1,q}(\Omega)}^{1\over p-1}+\mu\Big).
\end{equation}
\item For every $q>N$ there exists $C>0$ depending only on $p$, $q$  and $\Om$ such that 
\begin{equation}
\norm{\nabla u}_{L^{\infty}(\Omega)^N}\leq C\Big(\norm{f}_{L^q(\Omega)}^{1\over p-1}+\mu\Big).
\label{lemma:reg:cond2}
\end{equation}
 \item For every $\gamma>-1$, there exists $C>0$ depending only on $p,N,\lambda,\gamma$ and $\Omega$ such that 
\begin{equation}
\label{lemma:reg:cond3}
\ba{l}\dis\into |\nabla u|^\gamma\Big({G'\big(|\nabla u|\big)\over |\nabla u|}\big|\nabla^2u\nabla u\big|^2+G\big(|\nabla u|\big)\big|\nabla^2u\big|^2\Big)\,dx\\ \ecart\dis 
\leq C\mu^{p+\gamma}+C\mu^{1+\gamma}\|f\|_{W^{1,1}(\Om)}+C\|f\|_{L^{p+\gamma\over p-1}(\Om)}^{p+\gamma\over p-1},
\ea\qquad \hbox{ if }\gamma\geq p-2,
\end{equation}
\begin{equation}
\label{lemma:reg:cond3b}
\ba{l}\dis\into |\nabla u|^\gamma\Big({G'\big(|\nabla u|\big)\over |\nabla u|}\big|\nabla^2u\nabla u\big|^2+G\big(|\nabla u|\big)\big|\nabla^2u\big|^2\Big)\,dx
\\ \ecart\dis \leq C\mu^{p+\gamma}+C\|f\|_{W^{1,{p+\gamma\over p-1}}(\Om)},\ea\qquad \hbox{ if }-1<\gamma<p-2.
\end{equation}
\end{enumerate}
\end{lemma}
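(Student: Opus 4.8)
The plan is to read \eqref{lemma:reg:eq:1} as a non-degenerate regularization of the $p$-Laplacian and to prove everything as an \emph{a priori} estimate, exploiting that $u\in C^2(\overline\Om)$ and $f\in C^{1,1}(\overline\Om)$ so that all the differentiations and integrations by parts below are licit; the only real content is that the constants depend on $p,q,N,\lambda,\mu,\Om$ and never on higher norms of $u$. Hypothesis \eqref{lemma:reg:hipd1} says the operator coincides with the genuine $p$-Laplacian on $\{|\nabla u|\ge\mu\}$, while \eqref{lemma:reg:hipd2} encodes degenerate ellipticity: the linearized matrix $a_{ij}:=G(|\nabla u|)\delta_{ij}+G'(|\nabla u|)\,\p_i u\,\p_j u/|\nabla u|$ satisfies $a_{ij}\xi_i\xi_j\ge(G(|\nabla u|)+G'(|\nabla u|)|\nabla u|)|\xi|^2\ge0$ together with the upper bound $a_{ij}\xi_i\xi_j\le 2\lambda|\nabla u|^{p-2}|\xi|^2$. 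These two one-sided bounds are what drive the whole argument.

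For items (1) and (2) I would work with the flux $\sigma:=G(|\nabla u|)\nabla u$, which solves $-{\rm div}\,\sigma=f$. From \eqref{lemma:reg:hipd1}--\eqref{lemma:reg:hipd2} one reads off the pointwise bound $|\nabla u|\le\mu+|\sigma|^{p'-1}$ (on $\{|\nabla u|\ge\mu\}$ one has $|\sigma|=|\nabla u|^{p-1}$, while on $\{|\nabla u|<\mu\}$ the gradient is bounded by $\mu$ and $|\sigma|\le\lambda\mu^{p-1}$). Since $(p'-1)(p-1)=1$, this identity converts \eqref{lemma:reg:cond1} into the estimate $\|\sigma\|_{L^q(\Om)}\le C(\|f\|_{W^{-1,q}(\Om)}+\mu^{p-1})$, which is exactly the nonlinear Calderón--Zygmund estimate for the quasilinear equation of $p$-growth: writing $f=-{\rm div}\,F$ with $\|F\|_{L^q(\Om)}\approx\|f\|_{W^{-1,q}(\Om)}$ and treating the non-degenerate region $\{|\nabla u|<\mu\}$, where the flux is bounded by $\lambda\mu^{p-1}$, as a lower-order perturbation. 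Estimate \eqref{lemma:reg:cond2} is then the standard $L^\infty$-gradient bound for operators of $p$-growth with right-hand side in $L^q$, $q>N$, in the spirit of the $p$-Laplacian regularity theory of \cite{lou2008singular}, carried out with the $\mu$-dependence made explicit through \eqref{lemma:reg:hipd1}--\eqref{lemma:reg:hipd2}.

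The heart of the Lemma is the second-order estimate (3), which I would obtain by a weighted Bernstein-type computation. Differentiating \eqref{lemma:reg:eq:1} with respect to $x_k$ gives $-\p_j(a_{jl}\p_l\p_k u)=\p_k f$, and I would test this equation with $|\nabla u|^\gamma\p_k u$ and sum over $k$. The leading term reproduces precisely the coercive integrand on the left of \eqref{lemma:reg:cond3}, namely $\int_\Om |\nabla u|^\gamma a_{jl}\,\p_j\p_k u\,\p_l\p_k u\,dx=\int_\Om |\nabla u|^\gamma\big(G(|\nabla u|)|\nabla^2u|^2+G'(|\nabla u|)|\nabla^2u\,\nabla u|^2/|\nabla u|\big)\,dx$, while differentiating the weight $|\nabla u|^\gamma$ produces an extra quadratic form in $\nabla^2u$ whose sign is controlled by $\gamma$; the restriction $\gamma>-1$ is exactly what keeps the \emph{combined} form comparable to the coercive integrand once \eqref{lemma:reg:hipd2} is invoked. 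The source term $\into\nabla f\cdot\nabla u\,|\nabla u|^\gamma\,dx$ is then handled by Young's inequality together with the gradient bounds from (1)--(2) and a splitting according to $\{|\nabla u|<\mu\}$ versus $\{|\nabla u|\ge\mu\}$; the dichotomy $\gamma\ge p-2$ versus $-1<\gamma<p-2$ in \eqref{lemma:reg:cond3}--\eqref{lemma:reg:cond3b} simply reflects the two admissible ways of estimating this term, giving either the pair $\|f\|_{W^{1,1}}$, $\|f\|_{L^{(p+\gamma)/(p-1)}}^{(p+\gamma)/(p-1)}$ or the single norm $\|f\|_{W^{1,(p+\gamma)/(p-1)}}$.

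The main obstacle, as always in such global second-order estimates, is the boundary. Integrating by parts produces a boundary integral over $\p\Om$ which has no obvious sign; using $u=0$ on $\p\Om$ one has $\nabla u=(\p_\nu u)\nu$ there, and differentiating this relation tangentially brings in the second fundamental form of $\p\Om$ — this is where the $C^2$ hypothesis on $\Om$ enters, and the resulting curvature contribution must be bounded by the right-hand side. The second technical difficulty is the degeneracy on $\{\nabla u=0\}$, aggravated by the weight $|\nabla u|^\gamma$ with possibly negative $\gamma$: I would run the entire computation on the regularized operator obtained by replacing $|\nabla u|$ with $(|\nabla u|^2+\ep^2)^{1/2}$, derive the estimate uniformly in $\ep$, and pass to the limit $\ep\to0$ using the nonnegativity of the integrand and Fatou's lemma.
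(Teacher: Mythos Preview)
Your outline follows essentially the same route as the paper: for (1) you split off the $p$-Laplacian and treat the region $\{|\nabla u|<\mu\}$ as a perturbation bounded by $C\mu^{p-1}$ (the paper writes this explicitly as $-\hdiv(|\nabla u|^{p-2}\nabla u)=f-\hdiv(|\nabla u|^{p-2}\nabla u-G(|\nabla u|)\nabla u)$ and invokes Mingione's nonlinear Calder\'on--Zygmund theory); for (3) you differentiate, test with $|\nabla u|^\gamma\partial_k u$, and handle the boundary via the curvature of $\partial\Om$, which is exactly what the paper does after deriving the boundary identity $-L\nabla^2u\,\nu\cdot\nu=f+G(|\nabla u|)h\cdot\nabla u$ on $\partial\Om$ with $h$ encoding the second fundamental form.

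A few execution differences are worth flagging. For (2) the paper does \emph{not} cite an off-the-shelf $L^\infty$ bound: it runs its own De Giorgi--Stampacchia level-set iteration, testing the differentiated equation with $(|\nabla u|^p-k)^+\partial_i u$ for $k>\mu^p$, so that all integrals live on $\{|\nabla u|>\mu\}$ where $G(s)=s^{p-2}$ exactly; this sidesteps the lack of a uniform lower ellipticity bound on $G$ that a direct citation would require. For (3) the paper regularizes the \emph{test function} via $(|\nabla u|^2+\ep)^{\gamma/2}\partial_i u$ and passes to the limit by Fatou, rather than regularizing the operator. The boundary contribution $\int_{\partial\Om}|\nabla u|^{p+\gamma}ds$ is controlled by a trace/interpolation inequality and then partly absorbed back into the left-hand side---a step your sketch gestures at but does not spell out.

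One small correction: your lower bound $a_{ij}\xi_i\xi_j\ge(G+G'|\nabla u|)|\xi|^2$ is only valid when $G'\le 0$ (otherwise the minimum eigenvalue is $G$, not $G+G's$), and your claimed upper bound $a_{ij}\xi_i\xi_j\le 2\lambda|\nabla u|^{p-2}|\xi|^2$ is not implied by \eqref{lemma:reg:hipd2}, which gives no upper control on $G'$. Neither error is fatal---the paper never uses a two-sided bound on the full matrix $L$, only its nonnegativity together with the exact form $G(s)=s^{p-2}$ for $s\ge\mu$---but you should not lean on those inequalities.
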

\begin{proof} In order to prove \eqref{lemma:reg:cond1}, we write (\ref{lemma:reg:eq:1}) as
$$-{\rm div} \big(|\nabla u|^{p-2}\nabla u\big)=f-{\rm div} \Big(|\nabla u|^{p-2}\nabla u-G(|\nabla u|)\nabla u\Big)\ \hbox{ in }\Omega,$$
where the last term in the right-hand side is bounded in $W^{-1,\infty}(\Omega)$ by $C\mu^{p-1}$. Then the result follows from Theorem 2.3 in \cite{Mingione}.\par
For the rest of the proof let us differentiate equation \eqref{lemma:reg:eq:1}
with respect to $x_{i}$. This gives 
\begin{equation}
\label{Ecuder}-{\rm div}\Big(L\nabla \partial_{i}u\Big)=\partial_{i} f\ \hbox{ in }\Omega,
\end{equation}
with 
\beq \label{defopL} L=\frac{G'\big(|\nabla u|\big)}{|\nabla u|}\nabla u\otimes\nabla u+G\big(|\nabla u|\big)I.\eeq
Observe that $L$ is non-negative thanks to \eqref{lemma:reg:hipd2}.\par
In order to estimate $\partial_i u$ from (\ref{Ecuder}), we also need to add some boundary conditions. For this purpose, fixed $\bar x\in \partial\Om$, we use that there exist $\delta>0$ and functions $\tau^1,\ldots,\tau^N\in C^1(B(\bar x,\delta))^N$ such that for every $ x \in B(\bar{x},\delta)$
\beq\label{condort} \left\{\ba{l}\dis \big\{\tau^1(x),\ldots,\tau^N(x)\big\}\ \hbox{ is an orthonormal basis of }\R^N,\\ \ecart
\tau^N(x)\hbox{ agrees with the unitary outside normal vector to }\Om\hbox{ on }\partial\Om\cap B(\bar x,\delta).\ea\right.\eeq
Using that
$$\nabla u=\sum_{i=1}^N\big(\nabla u\cdot \tau^i\big)\tau^i\ \hbox{ a.e. in }B(\bar x,\delta),$$
and \eqref{lemma:reg:eq:1}, we get
\beq\label{ecreg1}-\sum_{i=1}^N{\rm div}\big(G(|\nabla u|)\tau^i\big)\nabla u\cdot\tau^i-\sum_{i=1}^N\nabla \big(\nabla u\cdot\tau^i\big)\cdot\tau^iG(|\nabla u|)=f\ \hbox{ in }\Om,\eeq
where  thanks to $u$ vanishing on $\partial\Om$, we have 
$$\nabla u=(\nabla u\cdot\tau^N)\tau^N,\quad \nabla u\cdot\tau^i=0,\quad\nabla(\nabla u\cdot\tau^i)\cdot\tau^i=0\  \hbox{ on }\partial\Om,\ 1\leq i\leq N-1.$$
Thus, developping \eqref{ecreg1}, we get
$$-L\nabla^2 u\tau^N\cdot\tau^N=f+ G(|\nabla u|)\Big({\rm div}\,\tau^NI+\big(\nabla \tau^N\big)^t\Big)\tau^N\cdot\nabla u\ \hbox{ on }\partial\Om\cap B(\bar x,\delta).$$
By the arbitrariness of $\bar x$, we then deduce the existence of a vector function $h\in L^\infty(\partial\Om)^N$, which only depends on $\Om$, such that $\nabla u$ satisfies the boundary conditions
\beq\label{ecreg2} \left\{\ba{l}\dis \nabla u=|\nabla u|s\nu,\quad s\in\{0,1\} \hbox{ a.e. on }\partial\Om,\\ \ecart\dis -L\nabla^2 u\nu\cdot\nu=f+G(|\nabla u|)h\cdot\nabla u\ \hbox{ on }\partial\Om,\ea\right.\eeq
with $\nu$ the unitary outside normal on $\partial\Om.$
 \par
 Let us now prove (\ref{lemma:reg:cond1}). We reason similarly to \cite{DiBe}. For 
 \beq\label{defwthre}w=|\nabla u|^{2},\eeq
 and $k>\mu^p$, we multiply (\ref{Ecuder}) by $\big(w^{\frac{p}{2}}-k\big)^+\partial_iu\in H^{1}(\Omega)$ and integrate by parts. Adding in $i$ and taking into account \eqref{ecreg2}, we get
 $$\ba{l}\dis {p\over 4}\int_{\{w^{p\over 2}\geq k\}} w^{p-2\over 2}L\nabla w\cdot\nabla w\,dx+\sum_{i=1}^N\into \big(w^{\frac{p}{2}}-k\big)^+ L\nabla\partial_iu\cdot\nabla \partial_i u\,dx\\ \ecart\dis =-\int_{\partial\Om} s|\nabla u|\big(f+G(|\nabla u|)h\cdot\nabla u\big)
 \big(w^{\frac{p}{2}}-k\big)^+ds(x)+\into \nabla f\cdot \nabla u\big(w^{\frac{p}{2}}-k\big)^+dx\\ \ecart\dis
 =-\int_{\partial\Om} s|\nabla u|G(|\nabla u|)h\cdot\nabla u
 \big(w^{\frac{p}{2}}-k\big)^+ds(x)-\into f\Delta u\big(w^{\frac{p}{2}}-k\big)^+dx\\ \ecart\dis
 -{p\over 2}\int_{\{w^{p\over 2}\geq k\}} w^{p-2\over 2}f\nabla u\cdot\nabla w\,dx, \ea$$
 which thanks to $k>\mu$, \eqref{lemma:reg:hipd1} and \eqref{defopL} proves
 $$\ba{l}\dis \int_{\{w^{p\over 2}\geq k\}} w^{p-2}|\nabla w|^2dx+\into \big(w^{\frac{p}{2}}-k\big)^+ w^{p-2\over 2}\big|\nabla^2u\big|^2dx\\ \ecart\dis 
 \leq C\int_{\partial\Om} w^{p\over 2}
 \big(w^{\frac{p}{2}}-k\big)^+ds(x)+C\into |f|\big|\nabla^2 u\big|\big(w^{\frac{p}{2}}-k\big)^+dx
 +C\int_{\{w^{p\over 2}\geq k\}} w^{p-1\over 2}|f||\nabla w|dx, \ea$$
 and then, using Young's inequality
 \beq\label{ecreg3}\ba{l}\dis \int_{\{w^{p\over 2}\geq k\}} w^{p-2}|\nabla w|^2dx+\into \big(w^{\frac{p}{2}}-k\big)^+ w^{p-2\over 2}\big|\nabla^2u\big|^2dx\\ \ecart\dis 
 \leq C\int_{\partial\Om} w^{p\over 2}
 \big(w^{\frac{p}{2}}-k\big)^+ds(x)+C\int_{\{w^{p\over 2}\geq k\}} |f|^2w\,dx. \ea\eeq
 In the first term on the right-hand side we use that, thanks to the compact embedding of $W^{1,1}(\Om)$ into $L^1(\partial\Om)$, for every $\ep>0$, there exists $C_\ep>0$ such that
 $$\int_{\partial\Om} |v|ds(x)\leq C_\ep\into |v|dx+\ep\into |\nabla v|dx,\qquad\forall\, v\in W^{1,1}(\Om).$$
 Therefore there exists a constant $C$ depending on $p$ and $\epsilon$ such that
 $$ \int_{\partial\Om} w^{p\over 2}\big(w^{\frac{p}{2}}-k\big)^+ds(x)\leq  C\into w^{p\over 2}\big(w^{\frac{p}{2}}-k\big)^+dx+\ep\int_{\{w^{p\over 2}\geq k\}} w^{p-1}|\nabla w|dx.$$\par

 Replacing this inequality in (\ref{ecreg3}), taking $\ep$ small enough, and using  Young's inequality, we get
$$ \int_{\{w^{p\over 2}\geq k\}} w^{p-2}|\nabla w|^2dx 
 \leq C\int_{\{w^{p\over 2}\geq k\}} w^pdx+C\int_{\{w^{p\over 2}\geq k\}} |f|^2w\,dx,$$
 which by Sobolev's inequality and $f$ in $L^q(\Om)$ provides
  \beq\label{ecreg4}\left(\into\big|\big(w^{p\over 2}-k\big)^+\big|^{2^\ast}dx\right)^{2\over 2^\ast}\leq C\int_{\{w^{p\over 2}\geq k\}} w^pdx+
 C\|f\|_{L^q(\Om)}^2\left(\int_{\{w^{p\over 2}\geq k\}} w^{q\over q-2}dx\right)^{q-2\over q},\eeq
 with
 $$2^\ast={2N\over N-2}\ \hbox{ if }N>2,\quad 2^\ast \in (2,\infty)\ \hbox{ if }N=2.$$
 Now, we use that $q>N$ allows us to take $r>1$ large enough to have
$${2^\ast\over 2}\Big({q-2\over q}-{1\over r}\Big)>1,\qquad {2^\ast\over 2}\Big(1-{p\over r}\Big)>1.$$
For such $r$, we use H\"older's inequality in (\ref{ecreg4}) to get
$$\ba{ll}\dis \left(\into\big|\big(w^{p\over 2}-k\big)^+\big|^{2^\ast}dx\right)^{2\over 2^\ast} &\dis \leq C\left(\into w^rdx\right)^{p\over r}\Big|\{w^{p\over 2}\geq k\}\Big|^{1-{p\over r}}\\ \ecart &\dis+
 C\|f\|_{L^q(\Om)}^2\left(\into w^rdx\right)^{1\over r}\Big|\{w^{p\over 2}\geq k\}\Big|^{{q-2\over q}-{1\over r}}\ea$$
which by (\ref{lemma:reg:cond1}) with $q=2r/(p-1)$ and
$$\|f\|_{W^{-1,{2r\over p-1}}(\Om)}\leq C\|f\|_{L^q(\Om)},$$
implies 
$$ \left(\into\big|\big(w^{p\over 2}-k\big)^+\big|^{2^\ast}dx\right)^{2\over 2^\ast} \leq C\Big(\|f\|_{L^q(\Om)}^{1\over p-1}+\mu\Big)^{2p}
\Big|\{w^{p\over 2}\geq k\}\Big|^{
\min\big(1-{p\over r},{q-2\over q}-{1\over r}\big)
} .$$
 Taking $h>k$ and defining $\varphi$ by 
 $$\varphi(k)=\Big|\{w^{p\over 2}\geq k\}\Big|,$$
 we have then proved
 $$ \varphi(h)^{2\over 2^\ast}\leq {C\big(\|f\|_{L^q(\Om)}^{1\over p-1}+\mu\big)^{2p}\over (h-k)^2}\varphi(k)^{
 \min\big(1-{p\over r},{q-2\over q}-{1\over r}\big)
 },\ \hbox{ for }h>k\geq \mu^p,$$
 where $C$ only depends on $p,N$, and $\Om$. Lemma 4.1 in \cite{Sta} then proves \eqref{lemma:reg:cond2}.
 \par\medskip
 Let us now prove \eqref{lemma:reg:cond3}. Defining $w$ by (\ref{defwthre}), we take $(w+\ep)^{\gamma\over 2}\partial_i u$, with $\ep>0$,  $\gamma>-1$, as test function in  \eqref{lemma:reg:eq:1}. Using \eqref{ecreg2}, we get
\beq\label{Ecrregnu0}\ba{l}\dis {\gamma\over 4}\into (w+\ep)^{\gamma-2\over 2}L\nabla w\cdot\nabla w\,dx+\sum_{i=1}^N\into (w+\ep)^{\gamma\over 2} L\nabla\partial_iu\cdot\nabla \partial_i u\,dx\\ \ecart\dis =-\int_{\partial\Om} s|\nabla u|\big(f+G(|\nabla u|)h\cdot\nabla u\big)
 (w+\ep)^{\gamma\over 2} ds(x)+\into \nabla f\cdot \nabla u (w+\ep)^{\gamma\over 2} dx.
 \ea\eeq
 In this inequality, we observe that the integrand in the left-hand side is nonnegative due to
\beq\label{positiin} \ba{l}\dis 2w\sum_{i=1}^NL\nabla \partial_i u\cdot\nabla\partial_i u-L\nabla w\cdot\nabla w\\ \ecart\dis=
2|\nabla u|^2\sum_{i=1}^NL\nabla \partial_i u\cdot\nabla\partial_i u-2L(\nabla^2u\nabla u)\cdot(\nabla^2 u\nabla u)\geq 0\ \hbox{ a.e. in }\Om,\ea\eeq
 and  $\gamma>-1$. This allows us to use the Fatou Lemma on the left-hand side and the dominated convergence theorem on the right-hand side,
 when $\ep$ tends to zero, to deduce
 \beq\label{ecreg5}\ba{l}\dis {\gamma\over 4}\into w^{\gamma-2\over 2}L\nabla w\cdot\nabla w\,dx+\sum_{i=1}^N\into w^{\gamma \over 2}L\nabla\partial_iu\cdot\nabla \partial_i u\,dx\\ \ecart\dis \leq -\int_{\partial\Om} s|\nabla u|\big(f+G(|\nabla u|)h\cdot\nabla u\big)
 w^{\gamma\over 2} ds(x)+\into \nabla f\cdot \nabla u w^{\gamma\over 2} dx.\ea\eeq\par
 Let us first cosider the case $\gamma\geq p-2$. Defining $T\in W^{1,\infty}(0,\infty)$ by
$$T(s)=\left\{\ba{cl}\dis 0 &\hbox{ if }0\leq s\leq \mu^2\\ \ecart\dis {s\over \mu^2}-1 &\hbox{ if }\mu^2\leq s\leq 2\mu^2 \\ \ecart\dis 1 &\hbox{ if }s\geq 2\mu^2,\ea\right.$$
we decompose the last term in \eqref{ecreg5} as
 $$\into \nabla f\cdot \nabla u \,w^{\gamma\over 2} dx=\into \nabla f\cdot (1-T(w))\nabla u w^{\gamma\over 2} dx+\into \nabla f\cdot T(w)\nabla u w^{\gamma\over 2} dx.$$
Integrating by parts the last term, replacing in \eqref{ecreg5} and using Young's inequality, $h\in L^\infty(\partial\Om)$, and \eqref{lemma:reg:hipd1}, we deduce 
\beq\label{ecreg6}\ba{l}\dis\into w^{\gamma-2\over 2}L\nabla w\cdot\nabla w\,dx+\sum_{i=1}^N\into w^{\gamma \over 2}L\nabla\partial_iu\cdot\nabla \partial_i udx \leq \mu^{1+\gamma}\int_{\partial\Om}|f|\,ds(x)\\ \ecart\dis+C\int_{\partial\Om} w^{p+\gamma\over 2}\,ds(x)+
\mu^{1+\gamma}\into |\nabla f|dx+C\into |f|^2w^{\gamma-p+2\over 2}dx+C\mu^{1+\gamma}\into |f|\,dx.\ea\eeq
For the second term on the right-hand side we use the continuous embedding of $W^{1,1}(\Om)$ into $L^1(\partial\Om)$ and Young's inequality to get
\beq\label{ecreg6b}\ba{l}\dis\int_{\partial\Om} w^{p+\gamma\over 2}\,ds(x)\leq C\mu^{p+\gamma}+\int_{\partial\Om}\big|(w-\mu^2)^+\big|^{p+\gamma\over 2}\,ds(x)\\ \ecart\dis \leq  C\mu^{p+\gamma}+C\into w^{p+\gamma\over 2}dx+C\int_{\{w\geq \mu^2\}} w^{p+\gamma-2\over 2}|\nabla w|\,dx\\ \ecart\dis
\leq  C\mu^{p+\gamma}+C\Big(1+{1\over \delta}\Big)\into w^{p+\gamma\over 2}\,dx+C\delta\int_{\{w\geq \mu^2\}}  w^{p+\gamma-4\over 2}|\nabla w|^2dx,\ea\eeq
with $\delta>0$ arbitrary. Taking $\delta$ small enough, replacing in \eqref{ecreg6} and using H\"older's inequality we have
$$\ba{l}\dis\into w^{\gamma-2\over 2}L\nabla w\cdot\nabla w\,dx+\sum_{i=1}^N\into w^{\gamma \over 2}L\nabla\partial_iu\cdot\nabla \partial_i udx \leq \mu^{1+\gamma}\int_{\partial\Om}|f|\,ds(x)\\ \ecart\dis+ C\mu^{p+\gamma}+C\into w^{p+\gamma\over 2}dx+
\mu^{1+\gamma}\into |\nabla f|dx+C\into |f|^{p+\gamma\over p-1}dx+C\mu^{1+\gamma}\into |f|\,dx.\ea$$
Using \eqref{lemma:reg:cond1} with $q={p+\gamma\over p-1}$ and the continuous imbedding of $L^q(\Om)$ into $W^{-1,q}(\Om),$ combined with
(\ref{positiin}) and
\beq\label{ecreg6c}\sum_{i=1}^N L\nabla\partial_iu\cdot\nabla \partial_i u={G'\big(|\nabla u|\big)\over |\nabla u|}\big|\nabla^2u\nabla u\big|^2+G\big(|\nabla u|\big)\big|D^2u\big|^2,\ \hbox{ a.e. in }\Om,\eeq
we conclude \eqref{lemma:reg:cond3}.
 \par
 We now assume $-1<\gamma< p-2$. In this case we estimate the right-hand side in 
 (\ref{ecreg5}) as follows:\par
 For the first term, using (\ref{ecreg6b}), we have for $\delta<1$
 \beq\label{ecreg7}\ba{l}\dis \left|\int_{\partial\Om} s|\nabla u|\big(f+G(|\nabla u|)h\cdot\nabla u\big)
 w^{\gamma\over 2} ds(x)\right|\leq C \int_{\partial\Om}\big(|f|w^{\gamma+1\over 2}+w^{p+\gamma\over 2}\big)ds(x)\\ \ecart\dis \leq C\int_{\partial\Om}|f|^{p+\gamma\over p-1}ds(x)+C\int_{\partial\Om} w^{p+\gamma\over 2}ds(x)\\ \ecart\dis
 \leq C\int_{\partial\Om}|f|^{p+\gamma\over p-1}ds(x)+ C\mu^{p+\gamma}+{C\over\delta}\into w^{p+\gamma\over 2}dx+C\delta\int_{\{w\geq \mu^2\}}  w^{p+\gamma-4\over 2}|\nabla w|^2dx.\ea\eeq
For the second term on the right-hand side of (\ref{ecreg5}), we just use H\"older's inequality to get
 \beq\label{ecreg7b}\left|\into \nabla f\cdot\nabla u\,w^{\gamma\over 2}dx\right|\leq C\into |\nabla f|^{p+\gamma\over p-1}dx+C\into w^{p+\gamma\over 2}dx.\eeq
 Using \eqref{ecreg7} with $\delta$ small enough, and \eqref{ecreg7b} in \eqref{ecreg5}, and then using \eqref{lemma:reg:cond1} with $q={p+\gamma\over p-1}$, we conclude \eqref{lemma:reg:cond3b}.
\end{proof}
\begin{rem}
Since the constant in the previous theorem only depends on the norm in $L^\infty$ of the first derivative of the functions $\{\tau^{i}\}_{i=1}^{N}$ defined in \eqref{condort}, we can relax the conditions $u\in C^{2}(\bar{\Omega})$ and $\Omega$ of class $C^{2}$ to $u\in C^{1,1}(\bar{\Omega})$ and $\Omega$ of class $C^{1,1}$ by a density argument. 
\end{rem}
\begin{rem} As a simple case, Lemma \ref{lemma:reg} can  be applied to the $p$-Laplacian operator, $G(s)=|s|^{p-2}$. Indeed, since here $\mu=0$ it is simple to check that the proof above does not use the assumption $f\in W^{1,1}(\Om)$ in (\ref{lemma:reg:cond3}). Thus, it shows that for $f\in W^{-1,p'}(\Om)\cap L^{p+\gamma\over p-1}(\Om)$, if $\gamma\geq p-2$ or $f\in W^{-1,p'}(\Om)\cap W^{1,{p+\gamma\over p-1}}(\Om)$ if $-1<\gamma<p-2$, there exists a solution $u$ of \eqref{lemma:reg:eq:1} such that 
$$|\nabla u|^{p+\gamma-2\over 2}|\nabla^2u|\ \hbox{ belongs to }L^2(\Om),$$ 
i.e. $|\nabla u|^{p+\gamma\over 2}$ belongs to $H^1(\Om)$. 
In particular, it proves that $u$ belongs to $H^2(\Om)$ if $p<3$ and $f$ belongs to $W^{1,{2\over p-1}}(\Om)$. This  is a known result which can be found in \cite{CamSciu}. It also proves that for   $f\in  L^{2(1+r)}(\Om)$ if $r\geq 0$, or $f\in W^{1,2(1+r)}(\Om)$ if $-1/2<r<0$
the flux $\sigma=|\nabla u|^{p-2}\nabla u$ satisfies that
$|\sigma|^r D\sigma$ belongs to $L^2(\Om)^{N\times N}$, or equivalently, that $|\sigma|^r\sigma$ belongs to $H^1(\Om)^N$.
The case $r=0$  has been proved in \cite{lou2008singular}. 
\end{rem}
\begin{proof} [Proof of  Theorem \ref{ThprReg}] Let us assume the right-hand side $f$ in \eqref{Relaxed problem:min} smooth enough, which by $\hat u$ solution of (\ref{Problema:solou}) implies that $\hat u\in C^{0,\alpha}(\Om)$ for some $\alpha>0$ (see e.g. \cite{DiBe}) { and satisfies
\beq -{\rm div}\left(\frac{F'(|\nabla \hat{u}|)}{|\nabla\hat{u}|}\nabla \hat{u}\right)=f\ \mbox{ in }\Omega,\quad u\in W_{0}^{1,p}(\Omega).
\label{teo:reg:eqUhat}
\eeq}
\par
For $\ep>0$ small and $F$ defined by \eqref{Definicion:F}, we take $F_{\epsilon}:[0,\infty)\to [0,\infty)$ of class $C^{2}([0,\infty))$ such that for some $k>0$, it satisfies
\beq\label{proFep} \left\{\ba{c}\dis F_\ep(0)=0,\quad F'_\ep(s)\geq {s^{p-1}\over 2(1+c)^{p-1}},\quad \ep\leq F''_\ep(s)\leq  \ep+ks^{p-2},\quad \forall\, s\geq 0,\\ \ecart\dis
F_\ep(s)=F(s),\ \forall\, s\geq (1+c)\hat\mu,\quad \lim_{\ep\to 0}\|F_\ep-F\|_{L^\infty(0,\infty)}=0.\ea\right.\eeq
The existence of this approximation is ensured by Theorem 2.1 and {Remark} 3.1 in \cite{Ghomi}. Then,
we define $u_{\epsilon}$ as the unique solution of
\begin{equation}
\min_{u\in W_{0}^{1,p}(\Omega)\cap L^{2}(\Omega)}\left\{ \int_{\Omega}F_{\varepsilon}(|\nabla u|)dx +{1\over 2}\int_{\Omega}|u-\hat{u}|^2dx  -\int_{\Omega}f\,u\,dx\right\}.
\label{teo:reg:problem:min:epsilon}
\end{equation}
and therefore 
{
\beq
 -{\rm div}\left(\frac{F_{\varepsilon}'(|\nabla u\varepsilon|)}{|\nabla u_{\varepsilon}|}\nabla u_\varepsilon\right)+u_{\varepsilon}-\hat{u}=f\ \hbox{ in }\Om.
\label{teo:reg:problem:min:epsilon:1}
\eeq}
Since 
$$\ba{l}\dis \int_{\Omega}F_{\varepsilon}(|\nabla u_\ep|)dx +{1\over 2}\int_{\Omega}|u_\ep-\hat{u}|^2dx  -\int_{\Omega}fu_\ep\,dx \leq  \int_{\Omega}F_{\varepsilon}(|\nabla \hat u|)dx   -\int_{\Omega}f\hat u\,dx,\ea$$
we have that $u_\ep$ is bounded in $W^{1,p}_0(\Om)\cap L^2(\Om)$ and thus, up to a subsequence, it converges weakly in $W^{1,p}_0(\Om)\cap L^2(\Om)$ to a certain function $u_0$. Taking into account the uniform convergence of $F_\ep$ to $F$, and $F$ convex, we can pass to the limit in the above inequality to deduce
$$\ba{l}\dis \into F(|\nabla u_0|)dx +{1\over 2}\int_{\Omega}|u_0-\hat{u}|^2dx  -\into fu_0\,dx\\ \ecart\dis
\leq\liminf_{\ep\to 0}\left(
\int_{\Omega}F_{\varepsilon}(|\nabla u_\ep|)dx +{1\over 2}\int_{\Omega}|u_\ep-\hat{u}|^2dx  -\int_{\Omega}f\,u_\ep\,dx\right)\\ \ecart\dis\leq  \into F(|\nabla \hat u|)dx  -\into f\hat u\,dx,\ea$$
which combined with $\hat u$ solution of (\ref{Problema:solou}) shows 
$u_0=\hat u$ and
\beq\label{thprreg1}\lim_{\ep\to 0}\into F\big(|\nabla u_\ep|\big)dx=\lim_{\ep\to 0}\into F_\ep\big(|\nabla u_\ep|\big)dx=\into F\big(|\nabla \hat u|\big)dx.\eeq\par
On the other hand, the assumptions of $F_\ep$ imply that
{
$$\sigma_\ep=:{F_{\varepsilon}'\big(|\nabla u_\ep|\big)\over |\nabla u_\ep|}\nabla u_\ep$$}
is bounded in $L^{p'}(\Om)^N$, and then {by \eqref{teo:reg:problem:min:epsilon:1}}, for a subsequence, there exists $\sigma_0\in L^{p'}(\Om)^N$ such that
\beq\label{thprreg2}\sigma_\ep\rightharpoonup \sigma_0\ \hbox{ in }L^{p'}(\Om)^N,\quad {-{\rm div}(\sigma_{0})=f \mbox{ in }\Om}.\eeq
Taking $V\in L^p(\Om)^N$ and using the convexity of $F_\ep$, we have
{
$$\into {F'_\ep(|\nabla u_\ep|)\over|\nabla u_\ep|}\nabla u_\ep\cdot\big(V-\nabla u_\ep\big)dx\leq \into \big(F_\ep(|V|\big)-F_\ep(|\nabla u_\ep|)\big)dx,$$}
which can also be written as
{$$\ba{l}\dis\into \Big({F'_\ep(|\nabla u_\ep|)\over|\nabla u_\ep|}\nabla u_\ep-{F'_\ep(|\nabla \hat u|)\over|\nabla \hat u|}\nabla \hat u\Big)\cdot\nabla (\hat u-u_\ep)\,dx\\ \ecart\dis+
\into {F'_\ep(|\nabla \hat u|)\over|\nabla \hat u|}\nabla \hat u\cdot\nabla (\hat u-u_\ep)\,dx
+\into {F'_\ep(|\nabla u_\ep|)\over|\nabla u_\ep|}\nabla u_\ep\cdot\big(V-\nabla \hat u\big)\,dx
\\ \ecart\dis
\leq \into \big(F_\ep(|V|)-F_\ep(|\nabla u_\ep|)\big)dx.\ea$$}
{From \eqref{teo:reg:eqUhat}}, \eqref{thprreg1} and \eqref{thprreg2} we can pass to the limit in this inequality to deduce 
{$$\into \sigma_0\cdot\big(V-\nabla \hat u\big)\,dx\leq \into \big(F(|V|)-F(|\nabla \hat u|)\big)dx,\quad \forall\, V\in L^p(\Om)^N.$$}
Taking $V=\nabla \hat u+tW$, with $W\in L^p(\Om)^N$, $t>0$, dividing by $t$ and passing to the limit when $t$ tends to zero, we get
{$$\into \sigma_0\cdot W\,dx\leq \into {F'(|\nabla \hat u|)\over|\nabla \hat u|}\nabla \hat u\cdot W\,dx,\quad \forall\, W\in L^p(\Om)^N,$$}
which shows 
$$\sigma_0= {F'(|\nabla \hat u|)\over|\nabla \hat u|}\ \hbox{ a.e. in }\Om.$$
We have thus proved
$$u_\ep\rightharpoonup \hat u\ \hbox{ in }W^{1,p}_0(\Om),\qquad {F'_\ep(|\nabla u_\ep|)\over|\nabla u_\ep|}\nabla u_\ep\rightharpoonup {F'(|\nabla \hat u|)\over|\nabla \hat u|}\nabla \hat u\ \hbox{ in }L^{p'}(\Om)^N.$$
\par
Assuming $\Om\in C^{2,\alpha}$ we can apply for example Theorem 15.12 in \cite{GiTr} to deduce that $u_\ep$ belongs to $C^{2,\alpha}(\overline{\Om})$. On the other hand, we have that $G_\ep\in C^1([0,\infty))$ defined by
$$G_\ep(s)={F'_\ep(s)\over s}\ \hbox{ if }s>0,\quad G_\ep(0)=0,$$
satisfies
$$\ba{l}\dis{G'_\ep \big(|\nabla u_\ep\big)\over |\nabla u_\ep|}\big|\nabla^2u_\ep\nabla u_\ep\big|^2+G_\ep \big(|\nabla u_\ep|\big)\big|\nabla^2u_\ep\big|^2\\ \ecart\dis={F'_\ep (|\nabla u_\ep|)\over |\nabla u_\ep|}\Big(|\nabla u_\ep|^2-{|\nabla^2u_\ep\nabla u_\ep|^2\over|\nabla u_\ep|^2}\Big)+F''_\ep (|\nabla u_\ep|){|\nabla^2u_\ep\nabla u_\ep|^2\over|\nabla u_\ep|^2},\ea$$
while
$$|D\sigma_\ep|^2={F'_\ep (|\nabla u_\ep|)^2\over |\nabla u_\ep|^2}\Big(|\nabla u_\ep|^2-{|\nabla^2u_\ep\nabla u_\ep|^2\over|\nabla u_\ep|^2}\Big)+F''_\ep (|\nabla u_\ep|)^2{|\nabla^2u_\ep\nabla u_\ep|^2\over|\nabla u_\ep|^2},$$
Then, the assumptions of $F_\ep$ imply the existence of a constant $C>0$, which only depends on the constant $k$ in (\ref{proFep}) such that
$$|D\sigma_\ep|^2\leq C\big(\ep+|\nabla u_\ep|^{p-2}\big)\left({G'_\ep \big(|\nabla u_\ep\big)\over |\nabla u_\ep|}\big|\nabla^2u_\ep\nabla u\big|^2+G_\ep \big(|\nabla u_\ep|\big)\big|\nabla^2u_\ep\big|^2\right).$$
Using Lemma \ref{lemma:reg} and 
$$|\nabla u_\ep|\leq 2^{1\over p-1}(1+c)|\sigma_\ep|^{1\over p-1},$$
we conclude \eqref{teo:reg:cond1}, \eqref{teo:reg:cond2} and \eqref{teo:reg:cond3} for $f$ and $\Om$ smooth. The general case follows by an approximation argument.\par
Let us now show \eqref{defetaij}. First, we recall that since we are assuming $f\in W^{1,1}(\Om)\cap L^2(\Om)$, we have $\sigma$ in $H^1(\Om)^N$. Using that \eqref{def:flux} implies
$$\nabla \hat u=(1+c\hat\theta)|\hat\sigma|^{p'-2}\hat\sigma\ \hbox{ a.e. in }\Om,$$
and taking $i,j\in \{1,\ldots,N\}$, and $\Phi\in C^\infty_c(0,\infty)$, such that $\Phi=1$ in a neighborhood of $\hat\mu$, we get in the distributional sense 
\beq\label{expprderte}\ba{l}\dis\partial_j\hat u\partial_i[\Phi(|\hat \sigma|)]-\partial_i\hat u\partial_j[\Phi(|\hat \sigma|)]
=\partial_i \big(\partial_j\hat u\,\Phi(|\hat \sigma|)\big)-\partial_j \big(\partial_i\hat u\,\Phi(|\hat\sigma|)\big)\\ \ecart\dis=
\partial_i \Big((1+c\hat\theta)|\hat\sigma|^{p'-2}\Phi(|\hat \sigma|)\hat\sigma_j\Big)-\partial_j \Big((1+c\hat\theta)|\hat\sigma|^{p'-2}\Phi(|\hat \sigma|)\hat\sigma_i\Big)\\ \ecart\dis
=c\partial_i\hat\theta\,|\hat\sigma|^{p'-2}\Phi(|\hat \sigma|)\hat\sigma_j-c\partial_j\hat\theta\,|\hat\sigma|^{p'-2}\Phi(|\hat \sigma|)\hat\sigma_i\\ \ecart\dis
+(1+c\hat\theta)\Big(\partial_i\big(\Phi(|\hat\sigma|)|\hat\sigma|^{p'-2}\hat\sigma_j\big)-\partial_j\big(\Phi(|\hat\sigma|)|\hat\sigma|^{p'-2}\hat\sigma_i\big)\Big),
\ea\eeq
which using that the support of $\Phi$ is compact and that $\sigma$ belongs to $H^1(\Om)^N$ shows 
\beq\label{regdt1}|\hat\sigma|^{p'-2}\Phi(|\hat \sigma|)\big(\partial_i\hat\theta\,\hat\sigma_j-\partial_j\hat\theta\,\hat\sigma_i\big)\in L^2(\Om).\eeq
Now we recall that
$$\hat\theta=0\ \hbox{ in }\{|\hat\sigma|<\hat\mu\},\quad \hat\theta=1\ \hbox{ in }\{|\hat\sigma|>\hat\mu\}.$$ 
This implies that for every $\Psi\in C^\infty_c((0,\infty)\setminus\{\hat\mu\})$ we have
$$|\hat\sigma|^{p'-2}\Phi(|\hat \sigma|)\big(\partial_i\hat\theta\,\hat\sigma_j-\partial_j\hat\theta\,\hat\sigma_i\big)
=|\hat\sigma|^{p'-2}\Phi(|\hat \sigma|)\big(\partial_i\hat\theta\,\hat\sigma_j-\partial_j\hat\theta\,\hat\sigma_i\big)(1-\Psi(|\hat\sigma|)\big).
$$
By \eqref{regdt1} we can take $\hat\Psi=\hat\Psi_\delta$ with
$$0\leq \hat\Psi_\delta\leq 1,\quad \hat\Psi_\delta(\hat\mu)=0,\quad \hat\Psi_\delta(s)\to 1,\ \forall\, s\not =\hat\mu,$$
to deduce that 
$$|\hat\sigma|^{p'-2}\Phi(|\hat \sigma|)\big(\partial_i\hat\theta\,\hat\sigma_j-\partial_j\hat\theta\,\hat\sigma_i\big)$$
vanishes a.e. in $\{|\hat\sigma|\not=\hat\mu\}$ and then that
$$ |\hat\sigma|^{p'-2}\Phi(|\hat \sigma|)\big(\partial_i\hat\theta\,\hat\sigma_j-\partial_j\hat\theta\,\hat\sigma_i\big)=\hat\mu^{p'-2}\Phi(\hat\mu)\big(\partial_i\hat\theta\,\hat\sigma_j-\partial_j\hat\theta\,\hat\sigma_i\big)\Chi_{\{|\hat\sigma|=\hat\mu\}}.$$
On the other hand, recalling that $\nabla|\hat\sigma|=0$ a.e. in $\{|\hat\sigma|=\hat\mu\}$, we can return to (\ref{expprderte}) to conclude \eqref{defetaij}.\par 
Assertion (\ref{detevan}) now follows from Proposition 2.1 in  \cite{Bern}. which shows that 
$$\partial_i\hat\theta\hat\sigma_{j}-\partial_j\hat\theta\hat\sigma_i\in L^2(\Om),$$
implies
$$\partial_i\hat\theta\hat\sigma_{j}-\partial_j\hat\theta\hat\sigma_i=0\ \hbox{ a.e. in }\{\hat\theta=c\},\quad \forall\, c\in [0,1].$$
\end{proof}\par
\begin{proof} [Proof of Theorem \ref{teo:nonexistence}] 
Let $\hat{\omega}$ a mesurable subset of $\Om$, and $\hat u\in W^{1,p}_0(\Om)$ be such that
$(\chi_{\hat\om},\hat u)$ is a solution of \eqref{Relaxed problem:min} with $\tilde f=f$. 
By Remark \ref{remSoUn}, we have
$$
\left(\alpha\mathcal{X}_{\hat{\omega}}+\beta\mathcal{X}_{\Omega\setminus\hat{\omega}}\right)\nabla \hat{u}= \nabla w,$$
with $w$ the unique solution of
\beq\label{defwce}\left\{\ba{l}\dis 
 -{\rm div}\left(|\nabla w|^{p-2}\nabla w\right)=1 \mbox{ in }\Omega \\ \ecart\dis w\in W^{1,p}_0(\Om).\ea\right.
 \eeq
 Thanks to Theorem 1.1 in \cite{LIEBERMAN19881203} and the fisrt corollary in \cite{DiBe} we know that $w$ is in $C^{1,\beta}(\Omega)$ for some $\beta\in (0,1)$, and  (see \cite{Mor}) that it is analytic in $\{|\nabla w|>0\}$. Using Theorem 1.1 in \cite{lou2008singular} (or Theorem \ref{ThprReg}) we also have that $\hat\sigma=|\nabla w|^{p-2}\nabla w$ is in $H^1(\Om)^N$.  Thus, $-{\rm div}\hat\sigma=0$ a.e. in $\{\hat\sigma=0\}$, which combined with $w$ solution of (\ref{defwce}) implies that $\nabla w\not=0$ a.e. in $\Om$. Analogouly, let us prove that for every $\lambda>0$, the set $\{|\nabla w|=\lambda\}$ has zero measure. For this purpose we observe that  a.e. in $\{|\nabla w|=\lambda\}$, we have
 $$0=\Delta|\nabla w|^p=p\lambda^{p-2}\big(|\nabla^2w|^2+(\Delta \nabla w)\cdot\nabla w\big),$$
 but  a.e. in $\{|\nabla w|=\lambda\}$, we also have
 $$0=\nabla {\rm div}(|\nabla w|^{p-2}\nabla w)=\lambda^{p-2}\nabla \Delta w=\lambda^{p-2}\Delta \nabla w.$$
 Therefore $\nabla^2w=0$ a.e. in $\{|\nabla w|=\lambda\}$, which combined with
 $$-\lambda^{p-2}\Delta w=-{\rm div}(|\nabla w|^{p-2}\nabla w)=1 \ \hbox{ a.e. in }\{|\nabla w|=\lambda\},$$
 implies that the set $\{|\nabla w|=\lambda\}$ has zero measure. Now, we recall that 
  thanks to  \eqref{characterization:theta},  the constant $\hat\mu$ in Theorem \ref{Teo:problema:solou} satisfies
$$\{x\in \Omega:\ |\nabla w| >\hat{\mu}\}\subset\hat \omega\subset \{x\in \Omega:\  |\nabla w| \geq \hat{\mu}\},$$
while Theorem \ref{Teo:problema:solou} implies $ |\hat{\omega}|=\kappa.$
So, using that $ |\{|\nabla w|=\hat\mu\}|=0,$ we get (up to a set of null measure)
\beq\label{caraomCE}\om=\{x\in \Omega:\  |\nabla w| <\hat{\mu}\},\eeq
and $|\hat \om|<|\Om|$.
Then, taking a connected component $O$ of the open set $\{x\in \Omega:\  |\nabla w| >\hat{\mu}\},$ we can repeat the argument in \cite{Cas3} to deduce that $O\Subset \Om$ is  an analytic manifold with connected boundary such that 
\begin{equation}
\left\{\begin{array}{ll}
\displaystyle-{\rm div}\left(|\nabla w|^{p-2}\nabla w\right)=1 \mbox{ in }O  \\
\displaystyle w,\ \frac{\partial w}{\partial\nu} \mbox{ are constant on }\partial O.
\end{array}\right.
\end{equation}
From Serrin's Theorem (\cite{Ser}), this proves that  $O$ is an open ball and that $w$ is a radial function in $O$ with respect to its center. Taking into account the analyticity of  $w$ in $\{|\nabla w|\not =0\}$,  the unique continuation principle shows that $\Omega$ is a ball.
\end{proof}
\section{Conclusion Section}
In the present paper we have studied the optimal design of a two-phase material modeled by the $p$-Laplacian operator posed in a bounded open set  $\Om\subset\R^N$. The goal is  to maximize the potential energy (problem (\ref{Problema:Original})) when we only dispose of a limited amount of the best material. Since the problem has not solution in general, we have obtained a relaxed formulation (problems (\ref{ProRelIn}) and (\ref{ProRelIn2}))  where instead of taking in every point of $\Om$ one of both materials,  we use a microscopic mixture  where the proportion $\theta$ of the best  material  takes values in the whole interval $[0,1]$. This new formulation is obtained using homogenization theory. Reasoning by duality, we have also obtained a new formulation of the minimization problem as a min-max problem (problems (\ref{problem:flux min-max problem}) and (\ref{problem:flux max-min problem})). As a consequence we show that although the relaxed problem has not uniqueness in general, the flux $\hat \sigma$ is unique.\par
The optimal conditions for the relaxed problem show that the state function $\hat u$ is the solution of  a nonlinear Calculus of Variation problem (\ref{Problema:solou}). Since the second derivative of the function $F$ in this problem is not uniformly elliptic, the corresponding Euler-Lagrange equation does not provide in general the existence of second derivatives for $\hat u$. However it allows us to show that if the data es smooth enough then, for every $r>-1/2$, the function $|\hat \sigma|^r\hat\sigma$ is in the Sobolev space $H^1(\Om)^N\cap L^\infty(\Om)^N$. Moreover, the optimal proportion $\hat\theta$ is derivable in the orthogonal directions to $\nabla \hat u$. As an application of these results, we show that the original problem has a solution in a smooth open set $\Om$ with  a connected boundary if and only if $\Om$ is a ball.\par
The results obtained in the present paper extend those obtained by other authors in the case of the Laplacian operator (see e.g. \cite{Cas2},
\cite{ChEv}, \cite{GoKoRe}, \cite{MuTa}).

\newpage
\bibliographystyle{plain}
%\bibliography{bibliografia}

\end{document}